\theoremstyle{plain}
\newtheorem{theorem}{Theorem}
\newtheorem{proposition}{Proposition}
\newtheorem{lemma}{Lemma}
\newtheorem{corollary}{Corollary}
\theoremstyle{definition}
\newtheorem{definition}{Definition}
\newtheorem{example}{Example}
\newtheorem{lexample}{Logical Example}
\theoremstyle{remark}
\newcommand{\bbbn}{\mathbb{N}}
\newcommand{\bbbr}{\mathbb{R}}
\title{A Model Theory Approach to Structural Limits\thanks{This work, which appeared in Comment. Math. Univ. Carolin. 53,4 (2012) 581--603, is supported by grant ERCCZ LL-1201 of the Czech Ministry of Education and CE-ITI of GA{\v C}R}}
\author{J. Ne\v set\v ril \\
\small Computer Science Institute of Charles University (IUUK and ITI)\\
 \small    Malostransk\' e n\' am.25, 11800 Praha 1, Czech Republic\\
 \small    {\tt nesetril@kam.ms.mff.cuni.cz}
 \\
 \\
\and P. Ossona de Mendez\\
\small  Centre d'Analyse et de Math\'ematiques Sociales (CNRS, UMR 8557)\\
 \small   190-198 avenue de France, 75013 Paris, France\\
\small {\tt pom@ehess.fr}
}
\date{\small (appeared in Comment. Math. Univ. Carolin. 53,4 (2012) 581--603)}
\begin{document}
 \maketitle
 \begin{abstract}
The goal of this paper is to unify two lines in a particular 
area of graph limits.
First, we generalize and provide unified treatment of various
graph limit concepts by means of a combination of  model theory and analysis. 
Then, as an example, we generalize limits of bounded degree graphs from subgraph testing to 
finite model testing. 
 \end{abstract}
\section{Introduction}
\label{se:intro}
Recently,  graph sequences and
graph limits are intensively studied, from diverse point of views: 
probability theory and statistics, property testing in computer science, flag algebras,
logic, graphs homomorphisms, etc.
Four standard notions of graph limits have inspired this work:
\begin{itemize}
  \item the notion of {\em dense graph limit} \cite{Borgs2005,Lov'asz2006};
  \item the notion of {\em bounded degree graph limit} \cite{Benjamini2001,Aldous2006};
  \item the notion of {\em elementary limit} e.g. \cite{Hodges1997,Lascar2009}
  \item the notion of {\em left limit} developed by the authors
  \cite{ECM2009,Sparsity}.
\end{itemize}

Let us briefly introduce these notions. Our combinatorial terminology is standard and we refer to the standard books 
(such as \cite{Hodges1997,Invitation1998,Sparsity,Rudin1973}) 
or original papers for more information.

The first approach consists in randomly picking a mapping from a test graph and to 
 check whether this is a homomorphism. A sequence $(G_n)$ of graphs
 will be said to be {\em L-convergent} if 
 $$t(F,G_n)=\frac{{\rm hom}(F,G_n)}{|G_n|^{|F|}}$$
 converges for every fixed (connected) graph $F$.
 
 The second one is used to define the convergence of a sequence of graphs with
 bounded degrees. A sequence $(G_n)$ of graphs with bounded maximum degrees
 will be said to be {\em BS-convergent} if, for every integer $r$, the 
 probability that the ball of radius $r$ centered at a random vertex of $G_n$ is
 isomorphic to a fixed rooted graph $F$ converges for every $F$.

The third one is a general notion of convergence based on the first-order
properties satisfied by the elements of the sequence. A sequence
$(G_i)_{i\in\bbbn}$ is {\em elementarily convergent} 
 if,
for every sentence $\phi$ there exists an integer $n_\phi$ such that either all the $G_i$ with
$i>n_\phi$ satisfy $\phi$ or none of them do.

The fourth notion of convergence is based in testing existence of homomorphisms
from fixed graphs: a sequence $(G_n)$ is said to be {\em left-convergent} if,
for every graph $F$, either all but a finite number of the graphs $G_n$  contain
a homomorphic image of $F$ or only a finite number of $G_n$ does. In other
words, left-convergence is a weak notion of elementary convergence where
we consider primitive positive sentences only. 
 
 These four notions proceed in different directions and, particularly, relate
 to either dense or sparse graphs. The sparse--dense dichotomy seems to be a key
 question in the area.

In this paper we provide a unifying approach to these limits. Our approach is a combination of
a functional analytic and model theoretic approach and thus applies to applies to more general structures (rather than graphs). Thus we use term {\em structural limits}.

The paper is organized as follows: In Section~\ref{sec:bool} we briefly introduce a general machinery based on the 
Boolean algebras and dualities,
see \cite{Halmos1998} for standard background material.
In section~\ref{sec:logic} we apply this to Lindenbaum-Tarski algebras to get a representation of limits as measures (Theorem~\ref{thm:bamu}).
In section~\ref{sec:nonstd} we mention an alternative approach by means of ultraproducts (i.e. a non-standard approach) which yields 
another representation (of course ineffective) of limits (Proposition~\ref{prop:hlim}).
In section~\ref{sec:case} we relate this to examples given in this section and 
particularly state results for bounded degree graphs, thus extending Benjamini-Schramm convergence \cite{Benjamini2001}
to the general setting of ${\rm FO}$-convergence (Theorem~\ref{thm:Bounded}).
In the last section, we discuss the type of limit objects we would like to construct, and introduce
some applications to the study of particular cases of first-order convergence which are
going to appear elsewhere.
   
\section{Boolean Algebras, Stone Representation, and Measures}
\label{sec:bool}
Recall that a {\em Boolean algebra} $B$ is an algebra  
with two binary operations $\vee$ and $\wedge$, a unary operation $\neg$
 and two elements $0$ and $1$, such that
$(B,\vee,\wedge)$ is a distributive lattice with minimum $0$ and maximum $1$ 
which is complemented (in the sense that the complementation $\neg$ satisfies $a\vee \neg a=1$ and $a\wedge\neg a=0$). 

The smallest Boolean algebra, denoted $\mathbf 2$, has elements $0$ and $1$. In this Boolean algebra it holds
$0\wedge a=0, 1\wedge a=a, 0\vee a=a, 1\vee a=1, \neg 0=1$, and $\neg 1=0$.
Another example is
the powerset $2^X$ of a set $X$ has a natural structure of Boolean algebra, with
$0=\emptyset, 1=X$, $A\vee B=A\cup B$, $A\wedge B=A\cap B$ and $\neg A=X\setminus A$.

Key examples for us are the following:

\begin{lexample}
The class of all first-order formulas on a language $\mathcal L$, considered up to logical equivalence,
form a Boolean algebra  with conjunction $\vee$, disjunction $\wedge$ and negation $\neg$ and constants
``false'' ($0$) and ``true" ($1$). This Boolean algebra will be denoted ${\rm FO}(\mathcal L)$.

Also, we denote by ${\rm FO}_0(\mathcal L)$ the Boolean algebra
of all first-order sentences (i.e. formulas without free variables) on a language $\mathcal L$,
 considered up to logical equivalence. Note ${\rm FO}_0(\mathcal L)$ is a Boolean sub-algebra of
 ${\rm FO}(\mathcal L)$. 
\end{lexample}

\begin{lexample}
\label{lex:1}
Consider a logical theory $T$ (with negation).
The {\em Lindenbaum-Tarski} algebra $L_T$ of $T$ consists of the equivalence classes of sentences 
of $T$ (here two sentences $\phi$ and $\psi$ are equivalent
 if they are provably equivalent in $T$).
The set of all the first-order formulas that are provably false
from $T$ forms an ideal $\mathcal I_T$ of the Boolean algebra ${\rm FO}_0(\mathcal L)$ and
$L_T$ is nothing but the quotient algebra ${\rm FO}_0(\mathcal L)/\mathcal I_T$.
\end{lexample}

With respect to a fixed Boolean algebra $B$, a {\em Boolean function}
is a function obtained by a finite combination of the operations
$\vee$, $\wedge$, and $\neg$.

Recall that a function $f:B\rightarrow B'$
 is a {\em homomorphism} of Boolean algebras if
 $f(a\vee b)=f(a)\vee f(b)$, $f(a\wedge b)=f(a)\wedge f(b)$, $f(0)=0$ and $f(1)=1$. 
A filter of a Boolean algebra $B$ is an upper set $X$ (meaning that $x\in X$ and $y\geq x$ imply $y\in X$)
that is a proper subset of $B$ and that is closed under $\wedge$ operation ($\forall x,y\in X$ it holds $x\wedge y\in X$).
It is characteristic for Boolean algebras that the maximal filters
coincide with the {\em prime filters}, that is, the (proper) filters $X$ such that
$a\vee b\in X$ implies that either $a\in X$ or $b\in X$. One speaks of the
maximal (i.e. prime filters) as of {\em ultrafilters} (they are also characterized
by the fact that for each a either $a\in X$ or $\neg a \in X$). 
It is easily checked that the mapping
$f\mapsto f^{-1}(1)$ is a bijection between the homomorphisms $B\rightarrow \mathbf 2$ and the ultrafilters on $B$.

A {\em Stone space}  is a compact Hausdorff with a basis of clopen
subsets. With a Boolean algebra $B$ associate a topological space
$$S(B)=(\{x,\ x\text{ is a ultrafilter in }B\},\tau),$$
where $\tau$ is the topology generated by all the $K_B(b)=\{x,\ b\in x\}$ (the
subscript $B$ will be omitted if obvious). Then $S(B)$ is a Stone space.
By the well-known Stone Duality Theorem \cite{Stone1936},
the mappings $B \mapsto S(B)$ and $X\mapsto\Omega(X)$, where 
$\Omega(X)$ is the Boolean
algebra of all clopen subsets of a Stone space $X$, constitute a one-one 
correspondence between the classes of all Boolean algebras and all
Stone spaces.

In the language of category theory, Stone's representation theorem
means that there is a duality between the category of Boolean algebras
(with homomorphisms) and the category of Stone spaces (with continuous functions).
The two contravariant functors defining this duality are denoted by $S$ and $\Omega$
and defined as follows:

For every homomorphism $h:A\rightarrow B$ between two Boolean algebra,
we define the map $S(h):S(B)\rightarrow S(A)$ by $S(h)(g)=g\circ h$
(where points of $S(B)$ are identified with homomorphisms $g:B\rightarrow\mathbf 2$).
Then for every homomorphism $h:A\rightarrow B$, the map
$S(h):S(B)\rightarrow S(A)$ is a continuous function.
Conversely, for every continuous function $f:X\rightarrow Y$
between two Stone spaces, define the map $\Omega(f):\Omega(Y)\rightarrow\Omega(X)$
by $\Omega(f)(U)=f^{-1}(U)$ (where elements of $\Omega(X)$ are identified with
 clopen sets of $X$). Then for every continuous function 
 $f:X\rightarrow Y$, the map $\Omega(f):\Omega(Y)\rightarrow\Omega(X)$ is a 
 homomorphism of Boolean algebras.

We denote by $K=\Omega\circ S$ one of the two natural isomorphisms
defined by the duality. Hence, for a Boolean algebra $B$, 
$K(B)$ is the set algebra $\{K_B(b): b\in B\}$, and this 
algebra is isomorphic to $B$. 

Thus we have a natural notion for convergent sequence of elements of $S(B)$ 
(from Stone representation follows that this may be seen as the pointwise converegence).

\begin{lexample}
\label{lex:2}
Let $B={\rm FO}_0(\mathcal L)$ denote the Boolean Lindenbaum-Tarski algebra of all first-order sentences  
on a language $\mathcal L$ 
up to logical equivalence. Then the filters of $B$ are the consistent theories of
${\rm FO}_0(\mathcal L)$ and the ultrafilters of $B$ are the {\em complete theories} of ${\rm FO}_0(\mathcal L)$ (that
is maximal consistent sets of sentences). It follows that the closed sets of $S(B)$
correspond to finite sets of consistent theories.
According to G\"odel's completeness theorem, every consistent theory
has a model. It follows that the completeness theorem for first-order logic --- which states that 
that a set of first-order sentences has a model if and only if every finite subset of it has a model
--- amounts
to say that $S(B)$ is compact. The points of $S(B)$ can also be identified with {\em elementary equivalence} classes
of models. The notion of convergence of models induced by the topology of $S(B)$, called
{\em elementary convergence}, has been extensively studied.
\end{lexample}

An ultrafilter on a Boolean algebra $B$ can be considered as a finitely additive measure, 
for which every subset has either measure $0$ or $1$.
Because of the equivalence of the notions of Boolean algebra and of set algebra, we define
the {\em ba space} ${\rm ba}(B)$ of $B$ has the space of all bounded 
additive functions $f:B\rightarrow\bbbr$.
Recall that a function $f:B\rightarrow\bbbr$ is {\em additive} 
if for all $x,y\in B$ it holds
$$
x\wedge y=0\quad\Longrightarrow\quad f(x\vee y)=f(x)+f(y).
$$
The space ${\rm ba}(B)$ is a Banach space for the norm
$$
\|f\|=\sup_{x\in B} f(x) - \inf_{x\in B} f(x).
$$
(Recall that the ba space of an algebra of sets $\Sigma$ is the Banach space consisting of all bounded
 and finitely additive measures on $\Sigma$ with the total variation norm.) 

Let $h$ be a homomorphism $B\rightarrow\mathbf 2$ and let $\iota:\mathbf 2\rightarrow\bbbr$ be defined by
$\iota(0)=0$ and $\iota(1)=1$. Then $\iota\circ h\in{\rm ba}(B)$. Conversely, 
if $f\in{\rm ba}(B)$ is such that $f(B)=\{0,1\}$ then
$\iota^{-1}\circ f$ is a homomorphism $B\rightarrow\mathbf 2$.
 This shows that $S(B)$ can be identified 
with a subset of ${\rm ba}(B)$.  

One can also identify ${\rm ba}(B)$ with the space 
${\rm ba}(K(B))$ of finitely additive measure defined on the set algebra
$K(B)$. As  vector spaces ${\rm ba}(B)$ is isomorphic to ${\rm ba}(K(B))$ and thus  ${\rm ba}(B)$ 
is then 
clearly the (algebraic) dual
of the normed vector space $V(B)$ (of so-called {\em simple functions}) 
generated by the indicator functions 
of the clopen sets (equipped with supremum norm).
Indicator functions of clopen sets are denoted by
 $\mathbf 1_{K(b)}$ (for some $b\in B$) and defined by
$$\mathbf 1_{K(b)}(x)=\begin{cases}
1&\text{if }x\in K(b)\\
0&\text{otherwise.}
\end{cases}
$$ 
  
The pairing of a function $f\in {\rm ba}(B)$ and
a vector $X=\sum_{i=1}^n a_i \mathbf 1_{K(b_i)}$ is defined by
$$
[f,X]=\sum_{i=1}^n a_i f(b_i).
$$
That $[f,X]$ does not depend on a particular choice of a decomposition of $X$
follows from the additivity of $f$. We include a short proof for completeness:
Assume $\sum_i \alpha_i\mathbf 1_{K(b_i)}=\sum_i \beta_i\mathbf 1_{K(b_i)}$. As
for every $b,b'\in B$ it holds $f(b)=f(b\wedge b')+f(b\wedge\neg b')$
and $\mathbf 1_{K(b)}=\mathbf 1_{K(b\wedge b')}+\mathbf 1_{K(b\wedge \neg b')}$
we can express the two sums as
$\sum_j \alpha_j'\mathbf 1_{K(b_j')}=\sum_j \beta_j'\mathbf 1_{K(b_j')}$ (where
$b_i'\wedge b_j'=0$ for every $i\neq j$), with
$\sum_i \alpha_i f(b_i)=\sum_j \alpha_j' f(b_j')$ and
$\sum_i \beta_i f(b_i)=\sum_j \beta_j' f(b_j')$.
As $b_i'\wedge b_j'=0$ for every $i\neq j$, for $x\in K(b_j')$ 
it holds $\alpha_j'=X(x)=\beta_j'$. Hence $\alpha_j'=\beta_j'$ for every $j$.
Thus  $\sum_i \alpha_i f(b_i)=\sum_i \beta_i f(b_i)$.

Note that $X\mapsto [f,X]$ is indeed continuous. Thus ${\rm ba}(B)$ can
also be identified with the continuous dual of $V(B)$. We now show 
that the vector space $V(B)$ is dense in the space $C(S(B))$ of continuous
functions from $S(B)$ to $\bbbr$, hence that ${\rm ba}(B)$ can also
be identified with the continuous dual of $C(S(B))$:

\begin{lemma}
\label{lem:densv}
The vector space $V(B)$ is dense in $C(S(B))$ (with the uniform norm).
\end{lemma}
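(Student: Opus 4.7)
The plan is to apply the Stone--Weierstrass theorem to $V(B)$ as a subalgebra of $C(S(B))$. To do this I would verify the three hypotheses in turn: $V(B)$ is a subalgebra of $C(S(B))$ containing the constants, and it separates points of $S(B)$.

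First, every simple function $\sum a_i \mathbf 1_{K(b_i)}$ is continuous on $S(B)$ because each $K(b_i)$ is clopen (it is a basic open set of the Stone topology, and also the complement of $K(\neg b_i)$). Closure of $V(B)$ under addition and scalar multiplication is immediate. For closure under multiplication it suffices to observe that $\mathbf 1_{K(b)}\cdot \mathbf 1_{K(b')} = \mathbf 1_{K(b)\cap K(b')} = \mathbf 1_{K(b\wedge b')}$, which lies in $V(B)$; bilinearity then gives that any product of two simple functions is again simple. Since $\mathbf 1_{K(1)}$ is the constant function $1$ on $S(B)$, the algebra $V(B)$ contains the constants.

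Next, $V(B)$ separates points. If $x,y\in S(B)$ are distinct ultrafilters, pick $b\in x\setminus y$ (such a $b$ exists, otherwise $x\subseteq y$ and both being ultrafilters forces $x=y$). Then $\mathbf 1_{K(b)}(x)=1$ while $\mathbf 1_{K(b)}(y)=0$, so the two points are separated by an element of $V(B)$.

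With these three properties established, the real Stone--Weierstrass theorem applied to the compact Hausdorff space $S(B)$ yields that $V(B)$ is uniformly dense in $C(S(B))$, which is the claim. No step looks genuinely hard; the only thing worth double-checking is closure under multiplication, and that reduces (via the identity $\mathbf 1_{K(b)}\mathbf 1_{K(b')}=\mathbf 1_{K(b\wedge b')}$) to the fact that $K$ is a Boolean-algebra isomorphism from $B$ onto the clopen algebra of $S(B)$, already recorded in the excerpt. If one wished to avoid Stone--Weierstrass, one could instead argue directly: given $f\in C(S(B))$ and $\varepsilon>0$, cover $f(S(B))$ by finitely many open intervals of length $<\varepsilon$, pull back to an open cover of $S(B)$, refine to a finite cover by clopens using compactness and the clopen basis, disjointify, and build the corresponding simple function.
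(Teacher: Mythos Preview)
Your proof via Stone--Weierstrass is correct. The paper, however, takes the direct route you sketch in your final sentence: given $f\in C(S(B))$ and $\epsilon>0$, it covers $S(B)$ by basic clopens $K(b)$ on each of which $f$ oscillates by less than $\epsilon$, extracts a finite subcover by compactness, refines it to a finite pairwise-disjoint clopen partition (using $K(b)\cap K(b')=K(b\wedge b')$ and $K(b)\setminus K(b')=K(b\wedge\neg b')$), and then sets $\hat f=\sum_{b} f(x_b)\mathbf 1_{K(b)}$ for chosen sample points $x_b$. Your Stone--Weierstrass argument is shorter and cleaner, at the cost of invoking a nontrivial external theorem; the paper's argument is entirely self-contained and constructive, yielding an explicit $\epsilon$-approximant, which fits the expository tone of that section. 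It is worth noting that you anticipated exactly the paper's strategy as your fallback.
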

\begin{proof}
Let $f\in C(S(B))$ and let $\epsilon>0$. 
For $z\in f(S(B))$ let $U_z$ be the preimage by $f$ of the open ball $B_{\epsilon/2}(z)$ of $\bbbr$ centered in $z$.
As $f$ is continuous, $U_z$ is a open set of $S(B)$. As $\{K(b): b\in B\}$ is a basis of the topology of $S(B)$, 
$U_z$ can be expressed as a union $\bigcup_{b\in\mathcal F(U_z)}K(b)$. It follows that
$\bigcup_{z\in f(S(B))}\bigcup_{b\in\mathcal F(U_z)}K(b)$ is a covering of $S(B)$ by open sets. As $S(B)$ is compact,
there exists a finite subset $\mathcal F$ of $\bigcup_{z\in f(S(B))}\mathcal F(U_z)$ that covers $S(B)$.
Moreover, as for every $b,b'\in B$ it holds $K(b)\cap K(b')=K(b\wedge b')$ and $K(b)\setminus K(b')=K(b\wedge\neg b')$
it follows that we can assume that there exists a finite family $\mathcal F'$ such that
$S(B)$ is covered by open sets $K(b)$ (for $b\in \mathcal F'$) and such that for every $b\in \mathcal F'$ there exists
$b'\in\mathcal F$ such that $K(b)\subseteq K(b')$. In particular, it follows that for every $b\in\mathcal F'$, $f(K(b))$
is included in an open ball of radius $\epsilon/2$ of $\bbbr$. For each $b\in\mathcal F'$ choose a point $x_b\in S(B)$ such 
that $b\in x_b$. Now define
$$
\hat f=\sum_{b\in\mathcal F'}f(x_b)\mathbf 1_{K(b)}
$$   
Let $x\in S(B)$. Then there exists $b\in \mathcal F'$ such that $x\in K(b)$. Thus
$$|f(x)-\hat f(x)|=|f(x)-f(x_b)|<\epsilon.$$
Hence $\|f-\hat{f}\|_\infty<\epsilon$. 
\qed\end{proof}

It is difficult to exhibit a basis of $C(S(B))$ or
$V(B)$. However, every meet sub-semilattice
of a Boolean algebra $B$ generating $B$ contains 
(via indicator functions) a basis of $V(B)$:

\begin{lemma}
\label{lem:meetbasis}
Let $X\subseteq B$ be closed by $\wedge$ and such
that $X$ {\em generates} $B$ (meaning that every 
element of $B$ can be obtained 
as a Boolean function of finitely many elements
from $X$).

Then $\{\mathbf 1_b:\ b\in X\}\cup\{\mathbf 1\}$ 
(where $\mathbf 1$ is the constant function with value $1$) 
includes a basis of 
the vector space $V(B)$.
\end{lemma}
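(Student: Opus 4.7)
The plan is to show that the linear span $W$ of $\{\mathbf{1}_{K(b)} : b\in X\}\cup\{\mathbf{1}\}$ coincides with the full space $V(B)$; since any spanning family of a vector space contains a basis (via Zorn's lemma in the infinite-dimensional case), this will yield the statement.

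First I would verify that $W$ is closed under pointwise multiplication. The Stone-duality identity $\mathbf{1}_{K(a)}\mathbf{1}_{K(b)}=\mathbf{1}_{K(a\wedge b)}$, combined with the hypothesis that $X$ is $\wedge$-closed, shows that the product of two generator indicators lies in $\{\mathbf{1}_{K(c)}:c\in X\}\subseteq W$; the equality $\mathbf{1}\cdot f=f$ disposes of the constant generator; bilinearity then gives $W\cdot W\subseteq W$.

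Next, using the three identities
$$\mathbf{1}_{K(\neg a)}=\mathbf{1}-\mathbf{1}_{K(a)},\qquad \mathbf{1}_{K(a\wedge b)}=\mathbf{1}_{K(a)}\mathbf{1}_{K(b)},\qquad \mathbf{1}_{K(a\vee b)}=\mathbf{1}_{K(a)}+\mathbf{1}_{K(b)}-\mathbf{1}_{K(a)}\mathbf{1}_{K(b)},$$
I would prove by induction on the depth of a Boolean expression witnessing that $b$ is built from finitely many elements of $X$ (such an expression exists because $X$ generates $B$) that $\mathbf{1}_{K(b)}\in W$ for every $b\in B$. Each inductive step is handled either by subtraction from $\mathbf{1}$ or by the ring-closure of $W$ established above. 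Since $V(B)$ is by definition the linear span of $\{\mathbf{1}_{K(b)}:b\in B\}$, one concludes $W=V(B)$.

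The main obstacle — insofar as one exists — lies in the second step: without the $\wedge$-closure hypothesis, closure of $W$ under multiplication would itself be what we are trying to prove (since $\mathbf{1}_{K(a)}\mathbf{1}_{K(b)}=\mathbf{1}_{K(a\wedge b)}$ has no a priori reason to be in $W$), and the induction would become circular. The role of the hypothesis on $X$ is precisely to break this circularity by making ring-closure of $W$ immediate from the generator level; once this is in place the argument is essentially routine inclusion–exclusion.
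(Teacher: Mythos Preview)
Your proposal is correct and follows essentially the same route as the paper: both arguments rest on the identity $\mathbf{1}_{K(a)}\mathbf{1}_{K(b)}=\mathbf{1}_{K(a\wedge b)}$ together with the $\wedge$-closure of $X$, and both reduce $\mathbf{1}_{K(b)}$ for arbitrary $b\in B$ to a linear combination of indicators of finite meets of generators (plus the constant $\mathbf{1}$). The only cosmetic difference is that the paper packages the induction as a single polynomial expansion---writing $\mathbf{1}_b=P_F(\mathbf{1}_{b_1},\dots,\mathbf{1}_{b_k})$ and then collapsing each monomial $\prod_{i\in I}\mathbf{1}_{b_i}$ to $\mathbf{1}_{\bigwedge_{i\in I}b_i}$---whereas you structure the same computation as an explicit induction on expression depth after first isolating the ring-closure of $W$.
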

\begin{proof}
Let $b\in B$. As $X$ generates $B$ there exist
$b_1,\dots,b_k\in X$ and a Boolean function $F$
such that $b=F(b_1,\dots,b_k)$.
As $\mathbf 1_{x\wedge y}=\mathbf 1_x\,\mathbf 1_y$ and
$\mathbf 1_{\neg x}=\mathbf 1-\mathbf 1_x$ there exists a polynomial
$P_F$ such that $\mathbf 1_b=P_F(\mathbf 1_{b_1},\dots,\mathbf 1_{b_k})$.
For $I\subseteq [k]$, the monomial $\prod_{i\in I}\mathbf 1_{b_i}$
rewrites as $\mathbf 1_{b_I}$ where $b_I=\bigwedge_{i\in I}b_i$.
It follows that $\mathbf 1_b$ is a linear combination of the functions
$\mathbf 1_{b_I}$  ($I\subseteq [k]$) which belong to $X$ if $I\neq\emptyset$ (as $X$ is 
closed under $\wedge$ operation) and equal $\mathbf 1$, otherwise.
\qed\end{proof}

We are coming to the final transformation of our route: 
One can see that bounded additive real-value functions on a Boolean algebra $B$ naturally define
continuous linear forms on the vector space $V(B)$ hence, by density, on the Banach space $C(S(B))$ (of 
all continunous functions on $S(B)$ equipped with supremum norm).
It follows (see e.g. \cite{Rudin1973}) from  Riesz representation theorem that the topological dual of 
$C(S(B))$ is the space ${\rm rca}(S(B))$ of all regular countably additive measures on $S(B)$. 
Thus the equivalence of ${\rm ba}(B)$ and
${\rm rca}(S(B))$ follows. We summarize all of this as the following:

\begin{proposition}
\label{prop:barca}
Let $B$ be a Boolean algebra, let ${\rm ba}(B)$ be the Banach space of bounded additive real-valued functions 
equipped with the norm $\|f\|=\sup_{b\in B} f(b)-\inf_{b\in B} f(b)$, let $S(B)$ be the Stone space associated 
to $B$ by Stone representation theorem, 
and let ${\rm rca}(S(B))$ 
be the Banach space of the regular countably additive measure on $S(B)$ equipped with
the total variation norm.

Then the mapping $C_K: {\rm rca}(S(B))\rightarrow {\rm ba}(B)$ defined by $C_K(\mu)=\mu\circ K$ is an isometric isomorphism.
In other words, $C_K$ is defined by
$$C_K(\mu)(b)=\mu(\{x\in S(B):\ b\in x\})$$ (considering that the points of $S(B)$ are the ultrafilters on $B$).
\end{proposition}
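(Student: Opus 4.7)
The plan is to realize $C_K$ as a composition of three isometric isomorphisms, whose constituents are already assembled in the excerpt:
$$
{\rm rca}(S(B)) \ \xrightarrow{\;\sim\;}\ C(S(B))^{\ast} \ \xrightarrow{\;\sim\;}\ V(B)^{\ast} \ \xrightarrow{\;\sim\;}\ {\rm ba}(B),
$$
where the asterisk denotes the topological dual. A short diagram chase at the end will identify the composition with the stated formula $\mu \mapsto (b \mapsto \mu(K(b)))$.

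The first isomorphism is the Riesz representation theorem for the compact Hausdorff space $S(B)$, already invoked in the excerpt: every $\mu \in {\rm rca}(S(B))$ corresponds to the functional $g \mapsto \int g\,{\rm d}\mu$ on $C(S(B))$, and this correspondence matches total variation with operator norm. The second isomorphism is restriction from $C(S(B))$ to $V(B)$. Since $V(B)$ is dense in $C(S(B))$ by Lemma~\ref{lem:densv}, every bounded linear form on $V(B)$ extends uniquely by continuity to $C(S(B))$ with the same operator norm, so restriction is an isometric bijection of the two duals.

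The third isomorphism sends $f \in {\rm ba}(B)$ to the pairing $X \mapsto [f,X]$. Well-definedness, linearity, and boundedness were effectively checked in the text; the inverse sends a continuous linear form $L$ on $V(B)$ to the set function $b \mapsto L(\mathbf{1}_{K(b)})$, which is additive (since $\mathbf{1}_{K(b \vee b')} = \mathbf{1}_{K(b)} + \mathbf{1}_{K(b')}$ when $b \wedge b' = 0$) and bounded (since $L$ is). The isometry reduces to showing that the operator norm of $[f,\cdot]$ on $V(B)$ equals the total variation of $f$ seen as a finitely additive signed measure on $K(B)$: by the disjointification trick used in the excerpt, the supremum in the operator norm is attained by simple functions with coefficients $\pm 1$ supported on disjoint partitions of $1$, giving $\sup_{\pi} \sum_{b \in \pi} |f(b)| = |f|(1)$. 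Via the finitely additive Jordan decomposition $f = f^{+} - f^{-}$ of a bounded additive function on a Boolean algebra, one has $f^{+}(1) = \sup_{b} f(b)$ and $f^{-}(1) = -\inf_{b} f(b)$, hence $|f|(1) = \sup_{b} f(b) - \inf_{b} f(b)$, which matches the norm defining ${\rm ba}(B)$.

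Finally, tracing $\mu \in {\rm rca}(S(B))$ through the three arrows: the Riesz functional evaluates on a clopen indicator as $\int \mathbf{1}_{K(b)}\,{\rm d}\mu = \mu(K(b))$, so its preimage in ${\rm ba}(B)$ is the additive function $b \mapsto \mu(K(b))$, which is precisely $C_K(\mu)$ and equals $\mu(\{x \in S(B) : b \in x\})$ by the definition of $K$. I expect the main technical obstacle to be the norm identification in the third step: equating the operator norm on $V(B)$ with $\sup_b f(b) - \inf_b f(b)$ needs the finitely additive Hahn/Jordan decomposition of a bounded additive function on a Boolean algebra, which is standard but worth spelling out to justify that $C_K$ is truly an isometry rather than merely a linear bijection.
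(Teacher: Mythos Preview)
Your proposal is correct and follows essentially the same route as the paper: the paper presents Proposition~\ref{prop:barca} as a summary of the preceding discussion, which assembles exactly the chain ${\rm rca}(S(B))\cong C(S(B))^\ast\cong V(B)^\ast\cong {\rm ba}(B)$ via Riesz, the density Lemma~\ref{lem:densv}, and the pairing $[f,X]$. Your explicit treatment of the norm identification (via the finitely additive Jordan decomposition) is more detailed than what the paper provides, which simply asserts the equivalence without spelling out the isometry.
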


Note also that, similarly, the 
restriction of $C_K$ to the space ${\rm Pr}(S(B))$ of all (regular) probability measures on $S(B)$ 
is an isometric isomorphism of ${\rm Pr}(S(B))$ and the 
subset ${\rm ba}_1(B)$ of ${\rm ba}(B)$ of all positive additive functions
$f$ on $B$ such that $f(1)=1$.

A standard notion of convergence in ${\rm rca}(S(B))$ 
(as the continuous dual of $C(S(B))$) is the weak $*$-convergence: 
a sequence $(\mu_n)_{n\in\bbbn}$ of measures
is convergent if, for every $f\in C(S(B))$ the sequence $\int f(x)\,{\rm d}\mu_n(x)$ is convergent.
Thanks to the density of $V(B)$
 this convergence translates as pointwise convergence in ${\rm ba}(B)$ as follows:
a sequence $(g_n)_{n\in\bbbn}$ of functions in ${\rm ba}(B)$ is convergent if, for every $b\in B$ the sequence $(g_n(b))_{n\in\bbbn}$ is convergent.
As ${\rm rca}(S(B))$ is complete, so is ${\rm rca}(B)$. Moreover, it is easily checked that 
${\rm ba}_1(B)$ is closed in ${\rm ba}(B)$.

In a more concise way, we can write, for a sequence $(f_n)_{n\in\bbbn}$ of functions in ${\rm ba}(B)$ and
for the corresponding sequence $(\mu_{f_n})_{n\in\bbbn}$ of regular measures on $S(B)$:
$$
\lim_{n\rightarrow\infty} f_n\text{ pointwise}\qquad\iff\qquad \mu_{f_n}\Rightarrow\mu_f.
$$
The whole situation is summarized on Fig.~\ref{fig:topdual}.
\begin{figure}[h!t]
\begin{center}
\includegraphics[height=.4\textheight]{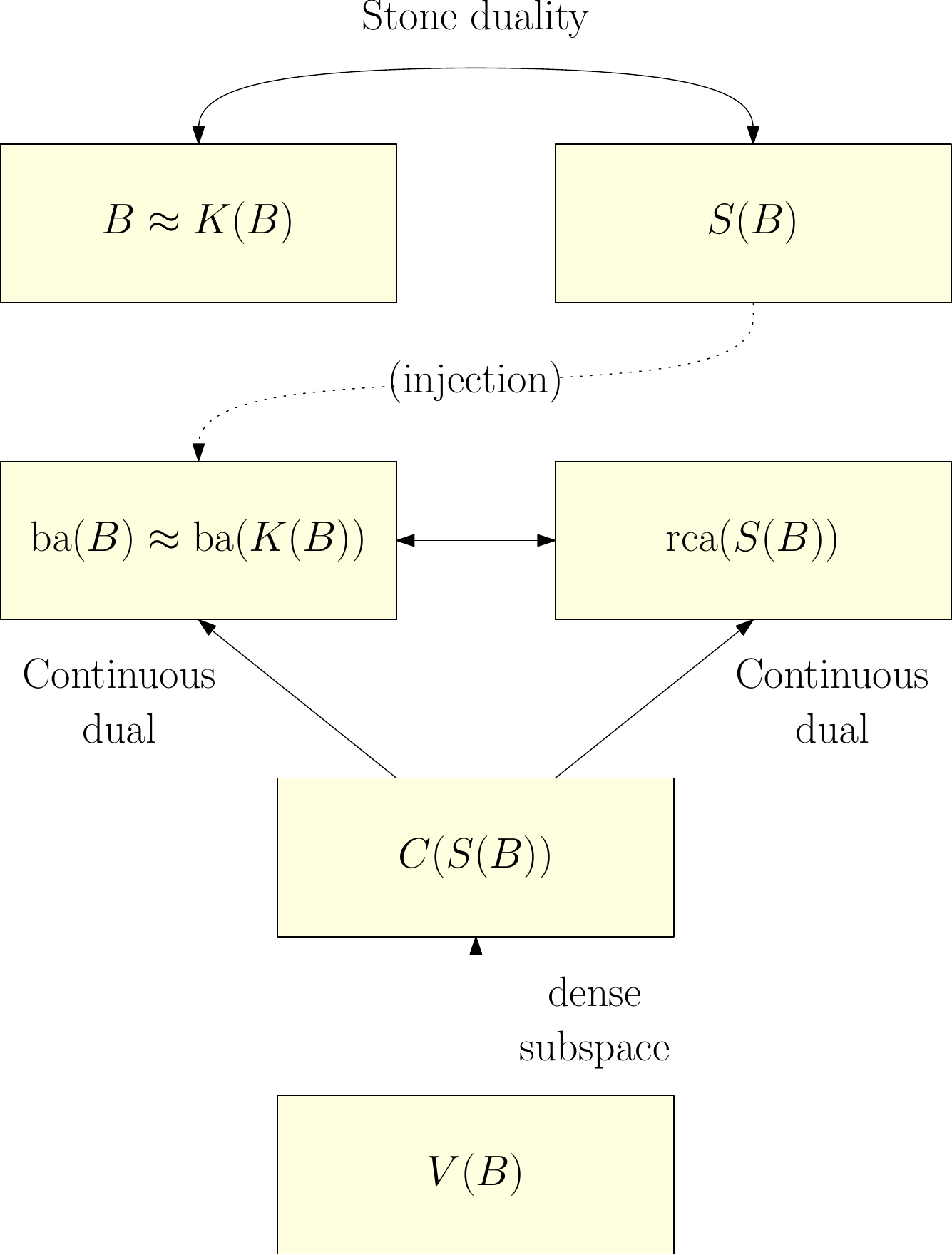}
\end{center}
\caption{Several spaces defined from a Boolean algebra, and their inter-relations.}
\label{fig:topdual}
\end{figure}

The above theory was not developed for its own sake but in order to demonstrate a natural approach to structural limits.
The next example is a continuation of our main interpretation, which we started in Logical examples~\ref{lex:1} and~\ref{lex:2}.

\begin{lexample}
Let $B={\rm FO}_0(\mathcal L)$ denote the Boolean algebra of all first-order sentences on a language $\mathcal L$ 
up to logical equivalence. We already noted that the points of $S(B)$
are complete theories of ${\rm FO}_0(\mathcal L)$, and that
each complete theory has at least one model.
Assume $\mathcal L$ is a finite language. Then for every $n\in\bbbn$
there exists a sentence $\phi_n$ such that for every complete theory $T\in{\rm FO}_0(\mathcal L)$
it holds $\phi_n\in T$ if and only if $T$ has a unique model and this model has at most $n$ elements.  
Let $U=\bigcup_{n\geq 1}K(\phi_n)$. Then $U$ is open but not closed.
The indicator function $\mathbf 1_U$ is thus measurable but not continuous.
This function has the nice property that for every complete theory $T\in S(B)$
it holds
$$
\mathbf 1_U(T)=\begin{cases}
1,&\text{if }T\text{ has a finite model;}\\
0,&\text{otherwise.}
\end{cases}
$$
\end{lexample}

\section{Limits via Fragments and Measures}
\label{sec:logic}
We provide a unifying approach based on the previous section. 
We consider the special case of Boolean algebras induced by a fragment of the class ${\rm FO}(\mathcal L)$ of
the first-order formulas over a finite relational language $\mathcal L$. In this context, the language $\mathcal L$
will be described by its {\em signature}, that is the set of non-logical symbols (constant symbols, and relation symbols,
 along with the arities of the relation symbols). An ${\rm FO}(\mathcal L)$-{\em structure}
is then a set together with an interpretation of all relational and function symbols. Thus for example
the signature of the language $\mathcal L^G$ of graphs is the symbol $\sim$ interpreted as the adjacency relation:
$x\sim y$ if $\{x,y\}$ is an edge of the graph.

We now introduce our notion of convergence.
Our approach is a combination of model theoretic and analytic approach. 

Recall that a formula is obtained from atomic formulas by the use of the negation ($\neg$), 
logical connectives ($\vee$ and $\wedge$), and quantification ($\exists$ and $\forall$).
A {\em sentence} (or closed formula) is a formula without free variables.

The {\em quantifier rank} ${\rm qrank}(\phi)$ of a formula $\phi$ is the maximum depth of
a quantifier in $\phi$. 
For instance, the quantifier rank of the formula
$$
\exists x\ ((\exists y\ (x\sim y)) \vee (\forall y\ \forall z\ \neg(x\sim y)\wedge\neg(y\sim z)))
$$ 
has quantifier rank $3$.

 The key to our approach is the following definition.

\begin{definition}
 Let $\phi(x_1,\dots,x_p)$ be a first-order formula with $p$ free variables 
 (in the language $\mathcal L$) and
 let $G$ be an  $\mathcal L$-structure. We denote
 
 \begin{equation}
 \label{eq:def}
 \langle \phi,G\rangle=\frac{|\{(v_1,\dots,v_p)\in G^p:\
 G\models\phi(v_1,\dots,v_p)\}|}{|G|^p}.
 \end{equation}
\end{definition}

 In other words, $\langle\phi,G\rangle$ is the probability that 
 $\phi$ is satisfied in $G$ when the $p$ free variables
 correspond to a random $p$-tuple of vertices of $G$. 
 The value $\langle\phi,G\rangle$ is called the {\em density} of $\phi$ in $G$.
 Note that this definition
 is consistent in the sense that although any formula $\phi$ with $p$ free
 variables can be considered as a formula with $q\geq p$ free variables with
 $q-p$ unused variables, we have
 
 $$\frac{|\{(v_1,\dots,v_q):\
 G\models\phi(v_1,\dots,v_p)\}|}{|G|^q}=\frac{|\{(v_1,\dots,v_p):\
 G\models\phi(v_1,\dots,v_p)\}|}{|G|^p}.
 $$
 
It is immediate that for every formula $\phi$ it holds
$\langle\neg\phi,G\rangle=1-\langle\phi,G\rangle$. Moreover, 
if $\phi_1,\dots,\phi_n$ are formulas, then by de Moivre's formula, it holds
 
 $$\langle\bigvee_{i=1}^n\phi_i,G\rangle=\sum_{k=1}^n
 (-1)^{k+1}\biggl(\sum_{1\leq i_1<\dots<i_k\leq n}\langle
 \bigwedge_{j=1}^k\phi_{i_j},G\rangle\biggr).$$
 
In particular, if $\phi_1,\dots,\phi_k$ are {\em mutually exclusive}
 (meaning that $\phi_i$ and $\phi_j$ cannot hold simultaneously for $i\neq j$)
 then it holds
$$\langle\bigvee_{i=1}^k\phi_i,G\rangle=\sum_{i=1}^k\langle \phi_i,G\rangle.$$
 
In particular, for every fixed graph $G$, the mapping $\phi\mapsto\langle\phi,G\rangle$ is additive
(i.e. $\langle\,\cdot\,,G\rangle\in{\rm ba}({\rm FO}(\mathcal L))$):
$$
\phi_1\wedge\phi_2=0\quad\Longrightarrow\quad \langle\phi_1\vee\phi_2,G\rangle=\langle\phi_1,G\rangle
+\langle\phi_2,G\rangle.
$$ 

Thus we may apply the above theory to additive functions $\langle\,\cdot\,,G\rangle$ and to structural limits we shall define now.

Advancing this note that  
in the case of a sentence $\phi$ (that is a formula with no free variables, i.e. $p=0$), the
definition reduces to 
$$\langle \phi,G\rangle=\begin{cases}
1,&\text{if }G\models\phi;\\
0,&\text{otherwise.}
\end{cases}.$$

Thus the definition of $\langle\phi,G\rangle$ will suit to the elementary convergence.
Elementary convergence and all above graph limits are captured by the following definition:

\begin{definition}
\label{def:converge}
Let $X$ be a fragment of ${\rm FO}(\mathcal L)$.

A sequence $(G_n)_{n\in\bbbn}$ of $\mathcal L$-structures is
{\em $X$-convergent} if for every $\phi\in X$, the sequence   
$(\langle\phi,G_n\rangle)_{n\in\bbbn}$ converges.
\end{definition}

For a Boolean sub-algebra $X$ of ${\rm FO}(\mathcal L)$, we define $\mathfrak T(X)$ has the
space of all ultrafilters  on $X$, which we call 
{\em complete $X$-theories}. The space $\mathfrak T(X)$ is endowed 
with the topology defined from its clopen sets, which are defined as the sets
$K(\phi)=\{T\in \mathfrak T(X): T\ni\phi\}$ for some $\phi\in X$. 
In the sake for simplicity, we denote by $\mathbf 1_\phi$ (for $\phi\in X$) the
indicator function of the clopen set $K(\phi)$ defined by $\phi$. Hence,
$\mathbf 1_\phi(T)=1$ if $\phi\in T$, and $\mathbf 1_\phi(T)=0$ otherwise.

It should be now clear that the above general approach yields the following:

\begin{theorem}
\label{thm:bamu}
Let $X$ be a Boolean sub-algebra of ${\rm FO}(\mathcal L)$ and let $\mathcal G$ be the class of all finite 
$\mathcal L$-structures.

There exists an injective mapping $G\mapsto\mu_G$ from $\mathcal G$ to the space of probability measures on $\mathfrak T(X)$ such that
for every $\phi\in X$ it holds
$$
\langle \phi, G\rangle=\int \mathbf 1_{\phi}(T)\,{\rm d}\mu_G(T).
$$ 
A sequence $(G_n)_{n\in\bbbn}$ of finite $\mathcal L$-structures is $X$-convergent if and only if the sequence
$(\mu_{G_n})_{n\in\bbbn}$ is weakly convergent. Moreover, if $\mu_{G_n}\Rightarrow\mu$ then for every $\phi\in X$ it holds
$$
\lim_{n\rightarrow\infty}\langle \phi, G_n\rangle=\int \mathbf 1_{\phi}(T)\,{\rm d}\mu(T).
$$   
\end{theorem}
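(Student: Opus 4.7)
The plan is to apply Proposition~\ref{prop:barca} directly with the Boolean algebra taken to be $X$, and to identify $\mu_G$ as the counterpart of the additive function $\phi\mapsto\langle\phi,G\rangle$ under the isometric isomorphism $C_K$. As already verified in the paragraphs preceding Definition~\ref{def:converge}, $\phi\mapsto\langle\phi,G\rangle$ is bounded and additive on ${\rm FO}(\mathcal L)$, hence on the sub-algebra $X$; it is non-negative and sends $1$ to $1$, so it lies in the subset ${\rm ba}_1(X)$ which Proposition~\ref{prop:barca} (together with the remark on probability measures immediately following it) puts in isometric bijection with ${\rm Pr}(\mathfrak T(X))$. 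I would define $\mu_G$ as the preimage under this bijection, so that $\mu_G(K(\phi))=\langle\phi,G\rangle$ for every $\phi\in X$. Since $\mathbf 1_\phi$ is the indicator of the clopen set $K(\phi)$, the integral identity $\int \mathbf 1_\phi\,{\rm d}\mu_G=\langle\phi,G\rangle$ is merely a rewriting of this equality.

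For injectivity, the task reduces to exhibiting, for two non-isomorphic finite $\mathcal L$-structures $G$ and $H$, some $\phi\in X$ with $\langle\phi,G\rangle\ne\langle\phi,H\rangle$. This is the only delicate point I expect in the proof. When $X$ contains all of ${\rm FO}_0(\mathcal L)$ (or any sub-algebra rich enough to admit a sentence characterizing each finite structure up to isomorphism, combined with size-counting sentences), such a $\phi$ is immediately produced and even takes values in $\{0,1\}$. In general, the statement should be read modulo the ``$X$-indistinguishability'' equivalence relation identifying structures not separated by any $\phi\in X$, under which injectivity is automatic.

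For the equivalence of $X$-convergence of $(G_n)_{n\in\bbbn}$ with weak convergence of the associated measures, I would invoke the discussion directly following Proposition~\ref{prop:barca}: because $C_K$ is an isometric isomorphism of ${\rm rca}(\mathfrak T(X))$ with ${\rm ba}(X)$ and weak $*$-convergence $\mu_{G_n}\Rightarrow\mu$ corresponds under $C_K$ to pointwise convergence of the associated elements of ${\rm ba}(X)$, and since the additive function attached to $\mu_{G_n}$ is precisely $\phi\mapsto\langle\phi,G_n\rangle$, the required pointwise convergence on $X$ is exactly $X$-convergence in the sense of Definition~\ref{def:converge}. Closedness of ${\rm ba}_1(X)$ in ${\rm ba}(X)$, noted just after Proposition~\ref{prop:barca}, ensures the limit is again a probability measure.

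Finally, for the displayed limit identity, I would observe that each $\mathbf 1_\phi$ is continuous on $\mathfrak T(X)$ (because $K(\phi)$ is clopen), so $\mu_{G_n}\Rightarrow\mu$ forces $\int\mathbf 1_\phi\,{\rm d}\mu_{G_n}\to\int\mathbf 1_\phi\,{\rm d}\mu$; the left-hand side equals $\langle\phi,G_n\rangle$ by the integral identity established in the first step, which completes the argument.
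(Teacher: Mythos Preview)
Your proposal is correct and follows precisely the route the paper intends: the paper does not give an explicit proof of Theorem~\ref{thm:bamu} but introduces it with ``It should be now clear that the above general approach yields the following,'' i.e.\ it is meant as an immediate consequence of Proposition~\ref{prop:barca} and the surrounding discussion, exactly as you have written it out.

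Your caution about injectivity is well placed and in fact more careful than the paper itself. As stated, injectivity of $G\mapsto\mu_G$ cannot hold for an \emph{arbitrary} Boolean sub-algebra $X$ (take $X=\{0,1\}$), so the claim must be read either for the specific fragments the paper goes on to use (all of which contain enough of ${\rm FO}_0(\mathcal L)$ to separate non-isomorphic finite structures) or, as you suggest, modulo $X$-indistinguishability. This is a genuine imprecision in the paper's statement rather than a gap in your argument.
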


In this paper, we shall be interested in specific fragments of ${\rm FO}(\mathcal L)$:
\begin{itemize}
  \item ${\rm FO}(\mathcal L)$ itself;
  \item ${\rm FO}_p(\mathcal L)$ (where $p\in\bbbn$), which is the fragment consisting of
  all formulas with at most $p$ free variables (in particular, ${\rm FO}_0(\mathcal L)$ is
  the fragment of all first-order sentences);
  \item ${\rm QF}(\mathcal L)$, which is the fragment of {\em quantifier-free formulas} (that is: propositional logic);
  \item ${\rm FO}^{\rm local}(\mathcal L)$, which is the fragment of {\em local formulas}, defined as follows.
\end{itemize}
Let $r\in\bbbn$. A formula $\phi(x_1,\dots,x_p)$ is {\em $r$-local} if, for
every $\mathcal L$-structure $G$ and every $v_1,\dots,v_p\in G^p$ it holds
$$
G\models\phi(v_1,\dots,v_p)\quad\iff\quad
G[N_r(v_1,\dots,v_p)]\models\phi(v_1,\dots,v_p),$$
where $N_r(v_1,\dots,v_p)$ is the closed $r$-neighborhood of $x_1,\dots,x_p$ in the
$\mathcal L$-structure $G$ (that is the set of elements at distance at most $r$ from at least one
of $x_1,\dots,x_p$ in the Gaifman graph of $G$), and where $G[A]$ denotes the sub-$\mathcal L$-structure
of $G$ induced by $A$.
A formula $\phi$ is {\em local} if it is $r$-local for some $r\in\bbbn$;
the fragment ${\rm FO}^{\rm local}(\mathcal L)$ is the set of all
local formulas (over the language $\mathcal L$).
This fragment form an important fragment, particularly because of the following structure theorem. 

\begin{theorem}[Gaifman locality theorem \cite{Gaifman1982}]
\label{thm:gaifman}
For every first-order formula $\phi(x_1,\dots,x_n)$ there exist integers
$t$ and $r$ such that $\phi$ is equivalent to a Boolean combination of $t$-local
formulas $\xi_j(x_{i_1},\dots,x_{i_s})$ and sentences of the form

\begin{equation}
\label{eq:gl}
\exists
y_1\dots\exists y_m\biggl(\bigwedge_{1\leq i<j\leq m}{\rm
dist}(y_i,y_j)>2r\wedge\bigwedge_{1\leq i\leq
m}\psi(y_i)\biggr)
\end{equation}

where $\psi$ is $r$-local. Furthermore, if
$\phi$ is a sentence, only sentences~\eqref{eq:gl} occur in the Boolean
combination.
\end{theorem}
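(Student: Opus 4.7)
The plan is to prove the theorem by induction on the structure of $\phi$, preserving the invariant that every formula with $n$ free variables is equivalent to a Boolean combination of $t$-local formulas in (subsets of) its free variables and basic local sentences of the form~\eqref{eq:gl}, with uniform parameters $t,r$. Atomic formulas are already $0$-local. Boolean connectives preserve the required form immediately (after taking a common $t$ and $r$). Universal quantifiers reduce to existential ones via $\forall y\,\psi\equiv\neg\exists y\,\neg\psi$. So the only nontrivial case is $\exists y\,\psi(x_1,\dots,x_n,y)$.

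For the existential case, apply the induction hypothesis to $\psi$, pull the basic local sentences outside the quantifier (they do not involve $y$), and rewrite the remainder in disjunctive normal form. It then suffices to handle $\exists y\,\Theta(\bar x,y)$ for a single conjunction $\Theta$ of $t$-local literals. I split on the position of $y$ relative to $\bar x$: either $y$ lies in the closed $(2t+1)$-neighborhood of some $x_i$, or $\operatorname{dist}(y,x_i)>2t+1$ for every $i$. In the first case the existence of such a $y$ is itself a $(3t+1)$-local statement about $\bar x$ alone, since whether $\Theta$ holds for some close $y$ is determined by $G[N_{3t+1}(\bar x)]$.

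In the second case the balls $N_t(y)$ and $N_t(x_i)$ are disjoint, so by the standard decomposition of local formulas over disjoint neighborhoods, each $t$-local literal mentioning both $y$ and some $x_j$ is equivalent (under the separation assumption) to a Boolean combination of a $t$-local literal in $\bar x$ alone and a $t$-local literal in $y$ alone. Hence $\Theta$ rewrites as $\alpha(\bar x)\wedge\beta(y)$ conjoined with the separation condition, and the task becomes expressing $\exists y\bigl(\beta(y)\wedge\bigwedge_i\operatorname{dist}(y,x_i)>2t+1\bigr)$ using only local formulas in $\bar x$ and sentences of the form~\eqref{eq:gl}.

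The main obstacle is precisely this last step, and it is the reason~\eqref{eq:gl} allows general $m$: the number of $\beta$-witnesses inside $N_{2t+1}(\bar x)$ is a bounded local quantity $k(\bar x)$ (a local formula in $\bar x$), while the total count of pairwise-scattered $\beta$-witnesses in $G$ is precisely what the basic local sentences ``$\exists y_1\dots y_m$ pairwise $(2t+1)$-far satisfying $\beta$'' measure for each $m$. A suitable Boolean combination of these ingredients realizes the statement ``there exists a $\beta$-witness outside $N_{2t+1}(\bar x)$'', completing the induction. For the refinement in the sentence clause, note that when $\phi$ has no free variables, the $t$-local subformulas produced above carry no free variables either, so they collapse into basic local sentences of the form~\eqref{eq:gl} (the case $m=1$), and only such sentences appear in the final Boolean combination.
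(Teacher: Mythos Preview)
The paper does not prove Gaifman's locality theorem; it is quoted as Theorem~\ref{thm:gaifman} with a citation to Gaifman's original 1982 paper and then used as a black box (in Proposition~\ref{prop:fole}). There is therefore no paper proof to compare your attempt against.

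On its own merits, your outline is the standard inductive proof and the overall architecture is correct: induction on formulas, the near/far case split for $\exists y$, and the reduction of the far case to a comparison between the local count of $\beta$-witnesses near $\bar x$ and the global scattered-witness sentences. Two places deserve more care. First, ``the number of $\beta$-witnesses inside $N_{2t+1}(\bar x)$'' is not a priori bounded (degrees are unbounded), so what you can express locally is only ``at least $m$'' for each fixed $m$; the argument works because you only need this up to $m=n+1$. Second, the sentence ``a suitable Boolean combination of these ingredients realizes the statement'' is exactly where the proof lives, and you have not supplied it. The standard argument uses scatteredness at radius roughly $4t+3$ (not $2t+1$): if there are $n+1$ pairwise $(4t+3)$-far $\beta$-witnesses then by pigeonhole one of them is $>2t+1$ from every $x_i$; conversely, if the maximal scattered set has size $k\le n$, then whether a far $\beta$-witness exists is determined by the $(O(t))$-neighborhood of $\bar x$ together with $k$, and one case-splits on $k$. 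Without spelling out this dichotomy (and the matching radii), the existential step is not actually proved.
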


From this theorem follows a general statement:

\begin{proposition}
\label{prop:fole}
Let $(G_n)$ be a sequence of graphs. Then $(G_n)$ is ${\rm FO}$-convergent if
and only if it is both ${\rm FO}^{\rm local}$-convergent and elementarily-convergent.
\end{proposition}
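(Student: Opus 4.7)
The forward implication is immediate: every local formula and every first-order sentence lies in ${\rm FO}(\mathcal L)$, so ${\rm FO}$-convergence specializes to both ${\rm FO}^{\rm local}$-convergence and elementary convergence. The work lies entirely in the converse, and my plan is to invoke Gaifman's locality theorem to exhibit the two hypotheses as covering complementary parts of any first-order formula.

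Fix $\phi(x_1,\dots,x_n) \in {\rm FO}(\mathcal L)$. I would first apply Theorem~\ref{thm:gaifman} to obtain integers $t,r$, local formulas $\xi_1,\dots,\xi_m$ (each $t$-local, with free variables among $x_1,\dots,x_n$), and sentences $\sigma_1,\dots,\sigma_k$ of the form~\eqref{eq:gl}, together with a fixed Boolean combination $B$ such that $\phi$ is logically equivalent to $B(\xi_1,\dots,\xi_m,\sigma_1,\dots,\sigma_k)$. The key point is that $B$ depends only on $\phi$, not on any particular structure.

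Next I would use elementary convergence to eliminate the sentence arguments of $B$. Each $\sigma_j$ is a closed formula, so $\langle \sigma_j, G \rangle \in \{0,1\}$ for every finite $\mathcal L$-structure $G$. By elementary convergence, for each $j$ the truth value of $\sigma_j$ in $G_n$ stabilizes at some $\varepsilon_j \in \{0,1\}$ for all sufficiently large $n$. Substituting these constants into $B$ yields a Boolean combination $\phi^*(x_1,\dots,x_n)$ of the $\xi_i$'s alone that is equivalent to $\phi$ on $G_n$ for every $n$ past some threshold $N$. Since local formulas are closed under $\neg$ and $\wedge$ (an $r$-local formula remains $r$-local under negation, and an $r$-local with an $s$-local formula combine into a $\max(r,s)$-local one), $\phi^*$ itself lies in ${\rm FO}^{\rm local}(\mathcal L)$. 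The ${\rm FO}^{\rm local}$-convergence hypothesis then forces $\langle \phi^*, G_n \rangle$ to converge, and the equality $\langle \phi, G_n \rangle = \langle \phi^*, G_n \rangle$ for $n \geq N$ gives the desired convergence of $\langle \phi, G_n \rangle$.

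I do not anticipate a real obstacle in the execution. The only subtlety worth flagging is verifying that the class of local formulas forms a Boolean subalgebra of ${\rm FO}(\mathcal L)$, which is what lets ${\rm FO}^{\rm local}$-convergence apply to $\phi^*$ without further decomposition. Without that observation one would have to expand $\phi^*$ into a disjoint sum over atoms in the subalgebra generated by the $\xi_i$ and appeal to additivity of $\langle\,\cdot\,,G\rangle$ termwise, but Gaifman's framework makes this detour unnecessary.
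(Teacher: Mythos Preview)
Your proof is correct and follows the same route as the paper: both invoke Gaifman's theorem to decompose $\phi$ into local formulas and sentences, then combine the two convergence hypotheses. Your execution is slightly tidier in that you first use elementary convergence to freeze the truth values of the sentences and collapse $\phi$ to a single local formula $\phi^*$, whereas the paper leaves $\langle\phi,G\rangle$ as a fixed function of the individual densities $\langle\xi,G\rangle$ (local or sentence) and lets both hypotheses act simultaneously; this is a cosmetic difference, not a substantive one.
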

\begin{proof}
Assume $(G_n)_{n\in\bbbn}$ is both ${\rm FO}^{\rm local}$-convergent and elementarily-convergent and
let $\phi\in{\rm FO}$ be a first order formula with $n$ free variables.
According to Theorem~\ref{thm:gaifman}, there exist integers
$t$ and $r$ such that $\phi$ is equivalent to a Boolean combination of $t$-local
formula $\xi(x_{i_1},\dots,x_{i_s})$ and of sentences. It follows that
$\langle\phi,G\rangle$ can be expressed as a function of
values of the form $\langle\xi,G\rangle$ where $\xi$ is either a local formula
or a sentence. Thus $(G_n)_{n\in\bbbn}$ is ${\rm FO}$-convergent.
\qed\end{proof}

Notice that if $\phi_1$ and $\phi_2$ are local formulas, so are $\phi_1\wedge\phi_2$,
$\phi_1\vee\phi_2$ and $\neg\phi_1$. It follows that ${\rm FO}^{\rm local}$ is a Boolean sub-algebra
of ${\rm FO}$. It is also clear that all the other fragments described above correspond 
to sub-algebras of ${\rm FO}$. This means that there exist  
canonical injective Boolean-algebra homomorphisms from these
fragments $X$ to ${\rm FO}$, that will correspond to surjective
continuous functions (projections) from $S({\rm FO})$ to $S(X)$
and it is not hard to see that they also correspond to surjective maps
from ${\rm ba}({\rm FO})$ to ${\rm ba}(X)$ and to surjective 
pushforwards from ${\rm rca}(S({\rm FO})$ to ${\rm rca}(S(X))$.

Recall that a {\em theory} $T$ is a set of sentences.
(Here we shall only consider first-order theories, so a theory is a set of first-order
sentences.) The theory $T$ is {\em consistent} if one cannot deduce from $T$
both a sentence $\phi$ and its negation.
The theory $T$ is 
{\em satisfiable} if it has a model. It follows from G\"odel's completeness
theorem that, in the context of first-order logic, a theory is consistent if and only if
it is satisfiable. 
Also, according to the compactness theorem,
a theory has a model if and only if every finite subset of it has a model.
 Moreover, according to the downward L\"owenheim-Skolem theorem, there exists a countable model.
A theory $T$ is a {\em complete theory} if it is consistent
and if, for every sentence $\phi\in{\rm FO}_0(\mathcal L)$, either $\phi$ or $\neg\phi$ belongs to $T$.
Hence every complete theory has a countable model. However, a complete theory
which has an infinite model has infinitely many non-isomorphic models.

It is natural to ask whether one can consider fragments 
that are not Boolean sub-algebras of ${\rm FO}(\mathcal L)$
and still have a description of the limit 
of a converging sequence as a probability measure on a nice
measurable space.
There is obviously a case where this is possible: when the 
convergence of $\langle\phi,G_n\rangle$ for every $\phi$ in 
a fragment $X$ implies the convergence of $\langle\psi,G_n\rangle$ 
for every $\psi$ in the minimum Boolean algebra containing $X$.
We prove now that this is an instance of a more general phaenomenon:
 
\begin{proposition}
\label{prop:extBA}
Let $X$ be a fragment of ${\rm FO}(\mathcal L)$ closed under (finite) 
conjunction --- that is: a {\em meet semilattice} of ${\rm FO}(\mathcal L)$ --- and let ${\rm BA}(X)$ 
be the Boolean algebra generated by $X$ (that is the closure of $X$ by $\vee,\wedge$ and $\neg$).
Then $X$-convergence is equivalent to ${\rm BA}(X)$-convergence. 
\end{proposition}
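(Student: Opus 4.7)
The easy direction is that ${\rm BA}(X)$-convergence implies $X$-convergence, since $X\subseteq{\rm BA}(X)$. The real work is the converse, and my plan is to reduce the density $\langle\psi,G\rangle$ of any $\psi\in{\rm BA}(X)$ to a fixed $\bbbr$-linear combination (with coefficients depending only on $\psi$, not on $G$) of densities $\langle\phi,G\rangle$ with $\phi\in X$, plus a constant term. Once that reduction is in hand, convergence of each $\langle\phi,G_n\rangle$ with $\phi\in X$ will automatically propagate to every $\psi\in{\rm BA}(X)$.

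The tool that does the combinatorial work is already set up: $X$ is a meet sub-semilattice of ${\rm BA}(X)$ which, by definition, generates ${\rm BA}(X)$, so Lemma~\ref{lem:meetbasis} applied with $B={\rm BA}(X)$ gives that $\{\mathbf 1_{K(\phi)}:\phi\in X\}\cup\{\mathbf 1\}$ contains a basis of $V({\rm BA}(X))$. In particular, for each $\psi\in{\rm BA}(X)$ I obtain a finite decomposition
\[
\mathbf 1_{K(\psi)} \;=\; c_0\mathbf 1 \;+\; \sum_{i=1}^m c_i\,\mathbf 1_{K(\phi_i)}
\]
with $\phi_i\in X$ and scalars $c_0,\dots,c_m$ depending only on $\psi$.

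The second ingredient is already noted just before Definition~\ref{def:converge}: for every finite $\mathcal L$-structure $G$ the map $\phi\mapsto\langle\phi,G\rangle$ is additive, hence lies in ${\rm ba}({\rm BA}(X))$. Through the canonical pairing of Section~\ref{sec:bool} this functional sends $\mathbf 1_{K(\phi)}$ to $\langle\phi,G\rangle$ and the constant $\mathbf 1$ to $\langle 1,G\rangle=1$, so the identity above in $V({\rm BA}(X))$ transports to the numerical identity $\langle\psi,G\rangle=c_0+\sum_{i=1}^m c_i\langle\phi_i,G\rangle$, which holds uniformly in $G$ because the coefficients do not depend on $G$.

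From here the conclusion is a one-liner: if $(G_n)_{n\in\bbbn}$ is $X$-convergent then each $\langle\phi_i,G_n\rangle$ converges, whence the right-hand side of the above identity converges, whence $\langle\psi,G_n\rangle$ converges for every $\psi\in{\rm BA}(X)$. The only point that deserves any care is precisely the $G$-independence of the coefficients $c_i$ — this is what justifies using a single linear relation for the whole sequence $(G_n)$ — and it is built into Lemma~\ref{lem:meetbasis}, whose decomposition is a statement inside $V({\rm BA}(X))$ with no reference to any particular $G$. I do not anticipate any substantive obstacle beyond invoking the lemma correctly.
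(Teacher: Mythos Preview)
Your proposal is correct and follows essentially the same approach as the paper: both invoke Lemma~\ref{lem:meetbasis} to express $\mathbf 1_{K(\psi)}$ as a linear combination $\alpha_0\mathbf 1+\sum_i\alpha_i\mathbf 1_{K(\phi_i)}$ with $\phi_i\in X$, and then transfer this identity to densities to obtain $\langle\psi,G\rangle=\alpha_0+\sum_i\alpha_i\langle\phi_i,G\rangle$. The only cosmetic difference is that the paper phrases the transfer via integration against the measure $\mu_G\in{\rm rca}(S({\rm BA}(X)))$, whereas you use the pairing of $\langle\,\cdot\,,G\rangle\in{\rm ba}({\rm BA}(X))$ with $V({\rm BA}(X))$ directly; these are the same operation by construction.
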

\begin{proof}
Let $\Psi\in {\rm BA}(X)$. 
According to Lemma~\ref{lem:meetbasis}, there exist
$\phi_1,\dots,\phi_k\in X$ and $\alpha_0,\alpha_1,\dots,\alpha_k\in\bbbr$ such that

$$
\mathbf 1_\Psi=\alpha_0\mathbf 1+\sum_{i=1}^k\alpha_i\mathbf 1_{\phi_i}.
$$
Let $G$ be a graph, let $\Omega=S({\rm BA}(X))$ and let $\mu_G\in{\rm rca}(\Omega)$ 
be the associated measure. Then
$$
\langle\Psi,G\rangle=\int_{\Omega}\mathbf 1_\Psi\,{\rm d}\mu_G
=\int_{\Omega}\bigl(\alpha_0\mathbf 1+\sum_{i=1}^k\alpha_i\mathbf 1_{\phi_i}\bigr)\,{\rm d}\mu_G
=\alpha_0+\sum_{i=1}^k\alpha_i\langle\phi_i,G\rangle.
$$

Thus if $(G_n)_{n\in\bbbn}$ is an $X$-convergent sequence,
the sequence $(\langle\psi,G_n\rangle)_{n\in\bbbn}$ converges for every
$\psi\in{\rm BA}(X)$, that is $(G_n)_{n\in\bbbn}$ is ${\rm BA}(X)$-convergent.
\qed\end{proof}

Continuing to develop the general mechanism for the structural limits we consider fragments of FO quantified by the number
of free variables.

We shall allow formulas with $p$ free variables to be considered as a formula with $q>p$ variables, 
$q-p$ variables being unused. As the order of the free variables in the definition of the formula is primordial,
it will be easier for us to consider sentences with $p$ constants instead of formulas with $p$ free variables.
Formally, denote by $\mathcal L_p$ the language obtained from $\mathcal L$ by adding $p$ (ordered) symbols of constants
$c_1,\dots,c_p$. 
There is a natural isomorphism of Boolean algebras $\nu_p:{\rm FO}_p(\mathcal L)\rightarrow {\rm FO}_0(\mathcal L_p)$,
which replaces the occurrences of the $p$ free variables $x_1,\dots,x_p$  
in a formula $\phi\in{\rm FO}_p$ by the corresponding symbols of constants
$c_1,\dots,c_p$, so that it holds, for every graph $G$, for every $\phi\in{\rm FO}_p$ and
every $v_1,\dots,v_p\in G$:

$$
G\models\phi(v_1,\dots,v_p)\quad\iff\quad (G,v_1,\dots,v_p)\models\nu_p(\phi).
$$

The Stone space associated to the Boolean algebra ${\rm FO}_0(\mathcal L_p)$ is
the space $\mathfrak T(\mathcal L_p)$ of all complete theories in the language $\mathcal L_p$.
Also, we denote by $\mathfrak T_\omega$ the Stone space representing
the Boolean algebra $\mathfrak T({\rm FO}_0(\mathcal L_\omega))\approx {\rm FO}$.
One of the specific properties of the spaces $\mathfrak T(\mathcal L_p)$ is that they are endowed with  
an ultrametric derived from the quantifier-rank:
$$
{\rm dist}(T_1,T_2)=\begin{cases}
0&\text{if }T_1=T_2\\
2^{-\min\{{\rm qrank}(\theta):\ \theta\in T_1\setminus T_2\}}&\text{otherwise}.
\end{cases}
$$
This ultrametric defines the same topology as the Stone representation theorem. As a compact metric space,
$\mathfrak T(\mathcal L_p)$ is (with the Borel sets defined by the metric topology) 
a standard Borel space.

For each $p\geq 0$, there is a natural projection $\pi_p:\mathfrak
T_{p+1}\rightarrow\mathfrak T_p$, which maps a complete theory $T\in\mathfrak T_{p+1}$
to the subset of $T$ containing the sentences where only the $p$ first constant
symbols $c_1,\dots,c_p$ are used. Of course we have to check that $\pi_p(T)$ is a complete theory in 
the language $\mathcal L_p$ but this is indeed so. 

According to the ultrametrics defined above, 
the projections $\pi_p$ are
contractions (hence are continuous). Also, there is a natural isometric
embedding $\eta_p:\mathfrak T_p\rightarrow\mathfrak T_{p+1}$ defined as follows:
for $T\in\mathfrak T_p$, the theory $\eta_p(T)$ is the deductive closure of
$T\cup\{c_p=c_{p+1}\}$. Notice that $\eta_p(T)$ is indeed complete: for every
sentence $\phi\in{\rm FO}(\mathcal L_{p+1})$, let $\widetilde{\phi}$ be the
sentence obtained from $\phi$ by replacing each symbol $c_{p+1}$ by $c_p$. It is
clear that $c_p=c_{p+1}\vdash \phi\leftrightarrow\widetilde{\phi}$. As either
$\widetilde{\phi}$ or $\neg\widetilde{\phi}$ belongs to $T$, either $\phi$ or
$\neg\phi$ belongs to $\eta_p(T)$. Moreover, we deduce easily from the fact
that $\widetilde{\phi}$ and $\phi$ have the same quantifier rank that $\eta_p$ is an
isometry. Finally, let us note that $\pi_p\circ\eta_p$ is the
identity of $\mathfrak T_p$.

For these fragments we shall show a particular nice construction, well non-standard construction, of limiting measure.

\section{A Non-standard Approach}
\label{sec:nonstd}
The natural question that arises from the result of the previous
section is whether one can always find a representation of the
${\rm FO}$-limit of an ${\rm FO}$-converging sequence by a
``nice'' measurable $\mathcal L$-structure. 

It appears that a general notion of limit object for ${\rm FO}$-convergence can be
obtained by a non-standard approach. In this we follow closely Elek and Szegedy~\cite{ElekSze}. 

We first recall the ultraproduct construction.
Let $(G_n)_{n\in\bbbn}$ be a finite sequence of finite $\mathcal L$-structures and let $U$ be a non-principal 
ultrafilter.
 Let $\widetilde{G}=\prod_{i\in\bbbn}G_i$ and let $\sim$ be the equivalence
 relation on $\widetilde{V}$ defined by $(x_n)\sim(y_n)$ if $\{n: x_n=y_n\}\in
 U$. Then the {\em ultraproduct} of the $\mathcal L$-structures $G_n$ is the quotient of
 $\widetilde{G}$ by $\sim$, and it is denoted $\prod_U G_i$. 
 For each relational symbol $R$ with arity $p$, the interpretation $R^{\widetilde{G}}$ of $R$ in the ultraproduct
 is defined by 
 $$
 ([v^1],\dots,[v^p])\in R^{\widetilde{G}}\quad\iff\quad\{n: (v^1_n,\dots,v^p_n)\in R^{G_n}\}\in U. 
 $$

The fundamental theorem of ultraproducts proved by {\L}o\'s makes ultraproducts
particularly useful in model theory. We express it now in the particular case of
$\mathcal L$-structures indexed by $\bbbn$ but its general statement concerns structures indexed
by a set $I$ and the ultraproduct constructed by considering an ultrafilter $U$
over $I$.

\begin{theorem}[\cite{Los1955a}]
\label{thm:los}
For each formula $\phi(x_1,\dots,x_p)$ and each $v^1,\dots,v^p\in\prod_i G_i$ we
have
$$
\prod_U G_i\models\phi([v^1],\dots,[v^p])\quad\text{ iff }\quad
\{i:\ G_i\models\phi(v^1_i,\dots,v^p_i)\}\in U.
$$
\end{theorem}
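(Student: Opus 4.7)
The natural approach is proof by induction on the complexity of the formula $\phi$, following the standard {\L}o\'s argument. I would first fix notation: for a formula $\psi(x_1,\dots,x_p)$ and a tuple $\bar v=(v^1,\dots,v^p)$ of sequences in $\prod_i G_i$, write $\llbracket\psi,\bar v\rrbracket=\{i\in\bbbn:\ G_i\models\psi(v^1_i,\dots,v^p_i)\}$. The goal is to show $\prod_U G_i\models\psi([v^1],\dots,[v^p])$ iff $\llbracket\psi,\bar v\rrbracket\in U$.

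For the base case, treat atomic formulas: an equality $x_i=x_j$ is handled directly by the definition of $\sim$, while a relational atom $R(x_{i_1},\dots,x_{i_q})$ is handled by the very definition of $R^{\widetilde G}$ given in the excerpt. One should also observe that the truth value depends only on the equivalence classes $[v^k]$ and not on the chosen representatives, which follows from the fact that if $v^k\sim w^k$ for each $k$ then the set of indices on which the representatives agree lies in $U$, and $U$ is closed under finite intersection together with taking supersets.

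For the inductive step one dispatches connectives and quantifiers. Negation uses the defining property of an ultrafilter: exactly one of $A$ and $\bbbn\setminus A$ belongs to $U$, so $\llbracket\neg\psi,\bar v\rrbracket=\bbbn\setminus\llbracket\psi,\bar v\rrbracket\in U$ iff $\llbracket\psi,\bar v\rrbracket\notin U$. Conjunction uses that filters are closed under binary intersection and supersets, giving $\llbracket\psi_1\wedge\psi_2,\bar v\rrbracket=\llbracket\psi_1,\bar v\rrbracket\cap\llbracket\psi_2,\bar v\rrbracket\in U$ iff both sets lie in $U$. Since these two connectives suffice (with $\exists$) as a complete basis, disjunction and universal quantification come for free.

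The main obstacle, as always in the ultraproduct theorem, is the existential quantifier, and specifically its backward direction. The forward direction is easy: if $\prod_U G_i\models\exists x\ \psi(x,[v^1],\dots,[v^p])$, pick a witness $[w]$; then by induction $\llbracket\psi,(w,v^1,\dots,v^p)\rrbracket\in U$, and this set is contained in $\{i:\ G_i\models\exists x\ \psi(x,v^1_i,\dots,v^p_i)\}$, which is therefore in $U$ by upward closure. For the converse, assume $A=\{i:\ G_i\models\exists x\ \psi(x,v^1_i,\dots,v^p_i)\}\in U$; for each $i\in A$ invoke the axiom of choice to select a witness $w_i\in G_i$, and set $w_i$ arbitrarily for $i\notin A$. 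Then $\llbracket\psi,(w,v^1,\dots,v^p)\rrbracket\supseteq A\in U$, so by the inductive hypothesis $\prod_U G_i\models\psi([w],[v^1],\dots,[v^p])$, which witnesses the existential. The delicate point is precisely this appeal to choice to assemble a global witness out of local ones, which is what makes the theorem genuinely non-constructive and explains its usefulness for the non-standard representation of limits alluded to in the next section.
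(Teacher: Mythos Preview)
Your argument is the standard, correct proof of {\L}o\'s's theorem by induction on formula complexity; the base case and the connective steps are handled properly, and your treatment of the existential quantifier (including the appeal to choice for the backward direction and the check of well-definedness on equivalence classes) is exactly what is needed.

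There is nothing to compare against, however: the paper does not prove this theorem. It is stated with a citation to {\L}o\'s's original 1955 paper and then used as a black box in the subsequent construction of the limit measure. So your write-up supplies a proof where the paper simply invokes the literature.
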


Note that if $(G_i)$ is elementary-convergent, then $\prod_U G_i$ is an
elementary limit of the sequence: for every sentence $\phi$, according to
Theorem~\ref{thm:los}, we have
$$
\prod_U G_i\models\phi\quad\iff\quad \{i:\ G_i\models\phi\}\in U.
$$

A measure $\nu$ extending the normalised counting measures $\nu_i$ of $G_i$
is then obtained via the Loeb measure construction.  We denote by $\mathcal P(G_i)$ the
Boolean algebra of the subsets of vertices of $G_i$, with the normalized measure
$\nu_i(A)=\frac{|A|}{|G_i|}$. We define $\mathcal P=\prod_i\mathcal P(G_i)/I$,
where $I$ is the ideal of the elements $\{A_i\}_{i\in\bbbn}$ such that
$\{i:\ A_i=\emptyset\}\in U$.
We have
$$
[x]\in[A]\quad\text{iff}\quad \{i:\ x_i\in A_i\}\in U.
$$
These sets form a Boolean algebra over $\prod_U G_i$. 
Recall that the ultralimit $\lim_U a_n$ defined for every
$(a_n)_{n\in\bbbn}\in\ell^\infty(\bbbn)$ is such that for every $\epsilon>0$ we
have 

$$
\{i:\ a_i\in[\lim_U a_n-\epsilon\,;\, \lim_U a_n+\epsilon]\}\in U.
$$

Define
$$
\nu([A])=\lim_U \nu_i(A_i).
$$

Then $\nu:\mathcal P\rightarrow\bbbr$ is a finitely additive measure.
Remark that, according to Hahn-Kolmogorov theorem, proving that $\nu$ extends
to a countably additive measure amounts to prove that for every sequence $([A^n])$
of disjoint elements of $\mathcal P$ such that $\bigcup_n [A^n]\in\mathcal P$ it holds
$\nu(\bigcup_n [A^n])=\sum_n\nu([A^n])$. 

A subset $N\subseteq \prod_U G_i$ is a {\em nullset} if for every $\epsilon>0$
there exists $[A^\epsilon]\in \mathcal P$ such that $N\subseteq [A^\epsilon]$
and $\nu([A^\epsilon])<\epsilon$. The set of nullsets is denoted by $\mathcal
N$. A set $B\subseteq \prod_U G_i$ is {\em measurable} if there exists
$\widetilde{B}\in\mathcal P$ such that $B\Delta\widetilde{B}\in\mathcal N$.

The following theorem is proved in \cite{ElekSze}:
\begin{theorem}
The measurable sets form a $\sigma$-algebra $B_U$ and
$\nu(B)=\nu(\widetilde{B})$ defines a probability measure on $B_U$.
\end{theorem}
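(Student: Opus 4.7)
The plan is to reduce the theorem to countable additivity of $\nu$ on the Boolean algebra $\mathcal P$ and then invoke the Hahn--Kolmogorov extension theorem. By the remark preceding the theorem, it suffices to show that whenever $([A^n])_{n\in\bbbn}$ is a sequence of pairwise disjoint elements of $\mathcal P$ with $\bigcup_n[A^n]=[A]\in\mathcal P$, we have $\nu([A])=\sum_n\nu([A^n])$. Setting $D_n=[A]\setminus\bigcup_{k\leq n}[A^k]\in\mathcal P$, the sequence $(D_n)$ is decreasing in $\mathcal P$ with $\bigcap_n D_n=\emptyset$ as a subset of $\prod_U G_i$; if we can establish that $D_N=\emptyset$ for some $N$, then $[A]=\bigcup_{k\leq N}[A^k]$ and finite additivity of $\nu$ (already proved) yields the required identity, the terms $\nu([A^n])$ for $n>N$ all being zero.

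The heart of the proof, and the main obstacle, is this emptiness claim, which is an instance of the $\aleph_1$-saturation of ultrapowers indexed by $\bbbn$ with respect to a non-principal ultrafilter. Suppose for contradiction every $D_n$ is nonempty, represented by $(D^n_i)_i$. The non-emptiness of $D_n$ means $S_n=\{i:D^n_i\neq\emptyset\}\in U$, and the inclusion $D_{n+1}\subseteq D_n$ means $T_n=\{i:D^{n+1}_i\subseteq D^n_i\}\in U$. Hence $U_n=\bigcap_{k\leq n}(S_k\cap T_k)\in U$, and by non-principality one may choose $i_1<i_2<\dots$ with $i_n\in U_n$. Choosing $y_i\in D^{n}_i$ for $i_n\leq i<i_{n+1}$ produces a sequence such that $\{i:y_i\in D^m_i\}$ is cofinite for every $m$, hence belongs to $U$; so $[y]\in\bigcap_n D_n$, contradicting the assumption.

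With $\sigma$-additivity of $\nu$ on $\mathcal P$ established, the Hahn--Kolmogorov theorem provides a unique countably additive probability measure $\bar\nu$ on $\sigma(\mathcal P)$ (it has total mass one since $\nu([\prod_U G_i])=\lim_U 1=1$). One then checks that $B_U$ is closed under complementation (since $B\Delta\widetilde B=B^c\Delta\widetilde B^c$) and under countable unions (using that countable unions of null sets are null --- a consequence of countable additivity of $\bar\nu$ --- together with standard approximation of $\bigcup_n\widetilde B_n\in\sigma(\mathcal P)$ by an element of $\mathcal P$ up to arbitrarily small $\bar\nu$-mass). Well-definedness of $\nu(B)=\nu(\widetilde B)$ is immediate: if $B\Delta\widetilde B_j\in\mathcal N$ for $j=1,2$, then $\widetilde B_1\Delta\widetilde B_2\in\mathcal N$ and nonnegativity forces $\nu(\widetilde B_1)=\nu(\widetilde B_2)$. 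The resulting probability measure on $B_U$ coincides with the restriction of $\bar\nu$ to its completion, completing the proof.
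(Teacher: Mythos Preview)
The paper does not actually prove this theorem; it is quoted from Elek--Szegedy \cite{ElekSze} (see the sentence immediately preceding the statement). So there is no in-paper proof to compare against. Your overall strategy---establish countable additivity of $\nu$ on $\mathcal P$ via $\aleph_1$-saturation of the ultraproduct, then invoke Hahn--Kolmogorov---is exactly the standard route (originating with Loeb) and is the right way to proceed.

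That said, your diagonal construction in the saturation step has a genuine gap. You choose $y_i\in D^n_i$ for $i_n\le i<i_{n+1}$ and claim that $\{i:y_i\in D^m_i\}$ is cofinite. Two problems: first, for $i$ strictly between $i_n$ and $i_{n+1}$ you have no guarantee that $D^n_i\neq\emptyset$, so $y_i$ may be undefined; second, even when $y_i\in D^n_i$ is defined, you need $D^n_i\subseteq D^m_i$ to conclude $y_i\in D^m_i$, but your sets $T_k$ only control the indices $i_n$, not the intermediate $i$. The conclusion ``cofinite'' is therefore unjustified. The standard fix is to first normalise the representatives: replace $D^n_i$ by $\bigcap_{k\le n}D^k_i$ (unchanged modulo $U$), so that $D^{n+1}_i\subseteq D^n_i$ holds for \emph{all} $i$, and set $D^0_i=G_i$. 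Then for each $i$ put $n(i)=\max\{n\le i: D^n_i\neq\emptyset\}$ and choose $y_i\in D^{n(i)}_i$; for fixed $m$ one gets $\{i:y_i\in D^m_i\}\supseteq\{i\ge m:D^m_i\neq\emptyset\}\in U$, which is what you need.

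A smaller point: your verification that $B_U$ is closed under countable unions relies on ``countable unions of null sets are null'', and you appeal to countable additivity of $\bar\nu$ together with approximation of $\bigcup_n\widetilde B_n$ by an element of $\mathcal P$. This is correct in outline but deserves one more sentence: having covered $N_k$ by $[A^k]\in\mathcal P$ with $\nu([A^k])<\epsilon/2^k$, you still need a single $[A]\in\mathcal P$ containing all the $[A^k]$ with $\nu([A])\le\epsilon$; this again uses saturation (applied to the increasing finite unions $[B^n]=\bigcup_{k\le n}[A^k]$), not just countable additivity of $\bar\nu$.
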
 
Notice that this construction extends to the case where to each $G_i$ is
associated a probability measure $\nu_i$. Then the limit measure $\nu$ is
non-atomic if and only if the following technical condition holds:
for every $\epsilon>0$ and for every $(A_n)\in\prod G_n$, if 
for $U$-almost all $n$ it holds $\nu_n(A_n)\geq\epsilon$ then there exists
$\delta>0$ and $(B_n)\in\prod G_n$ such that for $U$-almost all $n$ it holds
$B_n\subseteq A_n$ and $\min(\nu_n(B_n),\nu_n(A_n\setminus B_n))\geq\delta$. 
This obviously holds if $\nu_n$ is a normalized counting measure and
$\lim_U|G_n|=\infty$. 

Let $f_i:G_i\rightarrow[-d;d]$ be real functions, where $d>0$. One can define
$f:\prod_U G_i\rightarrow [-d;d]$ by 
$$f([x])=\lim_U f_i(x_i).$$
We say that $f$ is the {\em ultralimit} of the functions $\{f_i\}_{i\in\bbbn}$
and that $f$ is an {\em ultralimit function}.

Let $\phi(x)$ be a first order formula with a single free variable, and let 
$f^\phi_i:G_i\rightarrow\{0,1\}$ be defined by 
$$f^\phi_i(x)=\begin{cases}
1&\text{if }G_i\models\phi(x);\\
0&\text{otherwise.}
\end{cases}$$
and let $f^\phi:\prod_U G_i\rightarrow\{0,1\}$ be defined similarly on the $\mathcal L$-structure 
$\prod_U G_i$.
Then $f^\phi$ is the ultralimit of the functions $\{f^\phi_i\}$ according to
Theorem~\ref{thm:los}.

The following lemma is proved in \cite{ElekSze}.
\begin{lemma}
\label{lem:mulim}
The ultralimit functions are measurable on $\prod_U G_i$ and
$$
\int_{\prod_U G_i} f\, {\rm d}\nu=\lim_U\frac{\sum_{x\in G_i}f_i(x)}{|G_i|}.
$$
\end{lemma}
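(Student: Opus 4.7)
The plan is to reduce to simple functions by uniform approximation, then use linearity and the already-established case of indicator sets.

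First I would treat indicator functions. If $f_i=\mathbf 1_{A_i}$ for some $A_i\subseteq G_i$, then $f([x])=\lim_U \mathbf 1_{A_i}(x_i)$ equals $1$ precisely when $\{i:x_i\in A_i\}\in U$, i.e.\ when $[x]\in[A]$. Hence $f=\mathbf 1_{[A]}\in\mathcal P\subseteq B_U$ and the identity reduces to the defining relation $\nu([A])=\lim_U \nu_i(A_i)=\lim_U \frac{|A_i|}{|G_i|}$. By linearity in $f_i$ (both the ultralimit operation and the normalized sum are $\bbbr$-linear on bounded sequences, since any fixed finite linear combination commutes with $\lim_U$), this extends to any uniformly bounded sequence of simple functions $f_i=\sum_k \alpha_k\mathbf 1_{A_i^k}$ with the $A_i^k$ partitioning $G_i$: the ultralimit is $\sum_k\alpha_k\mathbf 1_{[A^k]}$, which is measurable, and the integral identity holds termwise.

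Next I would quantize an arbitrary bounded ultralimit function. Fix $\epsilon>0$ and partition $[-d,d]$ into finitely many half-open intervals $I_k=[k\epsilon,(k+1)\epsilon)$ for $|k|\leq\lceil d/\epsilon\rceil$. Setting $A_i^k=f_i^{-1}(I_k)$ gives a partition of $G_i$, and the simple function $\tilde f_i=\sum_k k\epsilon\,\mathbf 1_{A_i^k}$ satisfies $\|f_i-\tilde f_i\|_\infty\leq\epsilon$ for every $i$. Its ultralimit $\tilde f$ falls under the previous step, hence is measurable with
$$
\int_{\prod_U G_i}\tilde f\,{\rm d}\nu=\lim_U\frac{1}{|G_i|}\sum_{x\in G_i}\tilde f_i(x).
$$
Since $|f_i(x_i)-\tilde f_i(x_i)|\leq\epsilon$ for every $i$ and every representative, passing to the ultralimit gives $|f([x])-\tilde f([x])|\leq\epsilon$ pointwise on $\prod_U G_i$. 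Thus $f$ is the uniform limit (as $\epsilon\to 0$) of measurable simple functions, which establishes measurability with respect to $B_U$.

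Finally I would let $\epsilon\to 0$ in the integral identity. On the one hand, pointwise uniform closeness gives $\bigl|\int f\,d\nu-\int\tilde f\,d\nu\bigr|\leq\epsilon$. On the other hand, $\bigl|\frac{1}{|G_i|}\sum_x f_i(x)-\frac{1}{|G_i|}\sum_x\tilde f_i(x)\bigr|\leq\epsilon$ for every $i$, so the same bound survives the ultralimit. Combining, $\bigl|\int f\,d\nu-\lim_U\frac{1}{|G_i|}\sum_x f_i(x)\bigr|\leq 2\epsilon$ for all $\epsilon>0$, proving equality. The only point that needs a moment's care is step two, where one must check that $\lim_U$ genuinely commutes with a finite linear combination indexed by $k$; this is immediate because ultralimits of bounded sequences are additive and scalar-homogeneous, so the finiteness of the quantization is essential and there is no real obstacle.
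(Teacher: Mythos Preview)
The paper does not give its own proof of this lemma; it simply states that ``the following lemma is proved in \cite{ElekSze}'' and cites Elek--Szegedy. So there is no in-paper argument to compare against.

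Your argument is correct and is essentially the standard one you would find in the cited reference: first verify the identity for indicators of sets in $\mathcal P$ (where it reduces to the very definition $\nu([A])=\lim_U\nu_i(A_i)$), extend by finite linearity of the ultralimit to simple functions built from a finite partition, and then quantize the range of a general bounded ultralimit function to approximate it uniformly by such simple functions. The two points that matter---that a finite linear combination commutes with $\lim_U$, and that a uniform (hence pointwise) limit of $B_U$-measurable functions is $B_U$-measurable because $B_U$ is a $\sigma$-algebra---are both handled. The final $2\epsilon$ estimate is clean. Nothing is missing.
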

In particular, for every formula $\phi(x)$ with a single free variable, we have:
$$
\nu\bigl(\bigl\{[x]: \prod_U
G_i\models\phi([x])\bigr\}\bigr)=\lim_U\langle\phi,G_i\rangle.
$$

Let  $\psi(x,y)$ be a formula with two free
variables.
Define $f_i:G_i\rightarrow[0;1]$ by 
$$
f_i(x)=\frac{|\{y\in G_i:\ G_i\models\psi(x,y)\}|}{|G_i|}.
$$
and let
$$f([x])=\mu\bigl(\bigl\{[y]: \prod_U G_i\models\psi([x],[y]\bigr\}\bigr).$$
Let us check that $f([x])$ is indeed the ultralimit of $f_i(x_i)$.
Fix $[x]$. Let $g_i:G_i\rightarrow\{0,1\}$ be defined by
$$
g_i(y)=\begin{cases}
1&\text{if }G_i\models \psi(x_i,y)\\
0&\text{otherwise.}
\end{cases}
$$
and let $g:\prod_U G_i\rightarrow \{0,1\}$ be defined similarly by
$$
g([y])=\begin{cases}
1&\text{if }\prod_U G_i\models \psi([x],[y])\\
0&\text{otherwise.}
\end{cases}
$$

According to Theorem~\ref{thm:los} we have
$$
\prod_U G_i\models\psi([x],[y])\quad\iff\quad\{i:\ G_i\models\psi(x_i,y_i)\}\in
U.$$
It follows that $g$ is the ultralimit of the functions $\{g_i\}_{i\in\bbbn}$.
Thus, according to Lemma~\ref{lem:mulim} we have
$$
\nu\bigl(\bigl\{[y]: \prod_U G_I\models\psi([x],[y])\bigr\}\bigr)=
\lim_U \frac{|\{y\in G_i: G_i\models\psi(x_i,y_i)\}|}{|G_i|},
$$ 
that is:
$$
f([x])=\lim_U f_i(x_i).
$$
Hence $f$ is the ultralimit of the functions $\{f_i\}_{i\in\bbbn}$ and,
according to Lemma~\ref{lem:mulim}, we have 

$$
\iint 1_\psi([x],[y])\ {\rm d}\nu([x])\ {\rm
d}\nu([y])=\lim_U\langle\psi,G_i\rangle. $$

This property extends to any number of free variables. We formulate this as a summary of the results of this section.
\begin{proposition}
\label{prop:hlim}
Let $(G_n)_{n\in\bbbn}$ be a sequence of finite $\mathcal L$-structures and let $U$ be a non-principal ultrafilter
on $\bbbn$. Then there exists a measure $\nu$ on the ultraproduct $\widetilde{G}=\prod_U G_n$ such that
for every first-order formula $\phi$ with $p$ free
variables it holds:

$$
\idotsint\limits_{\widetilde{G}^p} \mathbf 1_\phi([x_1],\dots,[x_p])\ {\rm d}\nu([x_1])\,\dots\,{\rm
d}\nu([x_p])=\lim_U\langle\psi,G_i\rangle.$$
Moreover, the above integral is invariant by any permutation on the order of the integrations:
for every permutation $\sigma$ of $[p]$ it holds
$$\lim_U\langle\psi,G_i\rangle=
\idotsint\limits_{\widetilde{G}^p} 1_\phi([x_1],\dots,[x_p])\ {\rm d}\nu([x_{\sigma(1)}])\,\dots\,{\rm
d}\nu([x_{\sigma(p)}]).$$
\end{proposition}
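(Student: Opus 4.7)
The plan is to establish the identity by induction on the number $p$ of free variables of $\phi$, ``peeling off'' one variable at a time. The base case $p=1$ is exactly Lemma~\ref{lem:mulim} applied to the $\{0,1\}$-valued function $f^\phi$, and the case $p=2$ has been carried out in detail in the discussion immediately preceding the statement. The inductive step is a direct generalisation of that two-variable argument, the idea being that averaging over the last variable produces a $[0,1]$-valued function to which Lemma~\ref{lem:mulim} applies.

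Concretely, for $\phi(x_1,\dots,x_p)$ I would introduce, for each $j\in\{0,1,\dots,p\}$, the finite averages
$$h_i^{(j)}(v_1,\dots,v_j)=\frac{|\{(v_{j+1},\dots,v_p)\in G_i^{p-j}:G_i\models\phi(v_1,\dots,v_p)\}|}{|G_i|^{p-j}}$$
on $G_i^j$, together with their candidate ultraproduct counterparts
$$h^{(j)}([v_1],\dots,[v_j])=\idotsint\limits_{\widetilde G^{p-j}}\mathbf 1_\phi([v_1],\dots,[v_p])\,{\rm d}\nu([v_{j+1}])\cdots {\rm d}\nu([v_p]).$$
The central claim, to be proved by downward induction on $j$, is that $h^{(j)}$ is pointwise the ultralimit along $U$ of the functions $h_i^{(j)}$. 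The case $j=p$ is exactly {\L}o\'s's theorem (Theorem~\ref{thm:los}). To pass from $j$ to $j-1$, fix representatives $v_1,\dots,v_{j-1}$ and apply Lemma~\ref{lem:mulim} to the family of one-variable, $[0,1]$-valued functions $y\mapsto h_i^{(j)}(v_{1,i},\dots,v_{j-1,i},y)$ on $G_i$; the induction hypothesis identifies their ultralimit with the section $[y]\mapsto h^{(j)}([v_1],\dots,[v_{j-1}],[y])$ on $\widetilde G$, and Lemma~\ref{lem:mulim} then yields
$$h^{(j-1)}([v_1],\dots,[v_{j-1}])=\lim_U h_i^{(j-1)}(v_{1,i},\dots,v_{j-1,i}).$$
Specialising to $j=0$ gives the required identity, since $h_i^{(0)}=\langle\phi,G_i\rangle$ is a constant.

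The permutation invariance then follows by running the same inductive scheme on the relabelled formula $\phi_\sigma(y_1,\dots,y_p):=\phi(y_{\sigma^{-1}(1)},\dots,y_{\sigma^{-1}(p)})$. Indeed, at the finite level $\langle\phi_\sigma,G_i\rangle=\langle\phi,G_i\rangle$, because the density is an unweighted average over the entire cube $G_i^p$; and the iterated integral produced by the induction applied to $\phi_\sigma$ is precisely the one indexed by $\sigma$ in the statement.

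The main obstacle I expect is not in the algebra of the induction but in checking consistently at every level that the sections used inside Lemma~\ref{lem:mulim} are measurable and take values in a bounded interval, so that the lemma's hypotheses are satisfied. Measurability of each section is handled by applying {\L}o\'s's theorem fibre-wise to the set $\{[y]:\widetilde G\models\phi([v_1],\dots,[v_{p-1}],[y])\}$, and the $[0,1]$-range is automatic at every level since the $h_i^{(j)}$ are normalised counts. A related subtlety is that joint measurability of $h^{(j)}$ on $\widetilde G^j$ with respect to a single product $\sigma$-algebra is somewhat delicate, which is precisely why we work with \emph{iterated} integrals throughout, and is also the reason that the permutation statement must be deduced from the finite-level symmetry of $\langle\phi,G_i\rangle$ rather than directly from Fubini.
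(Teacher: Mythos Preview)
Your proposal is correct and follows exactly the approach of the paper: the text preceding the proposition works out the cases $p=1$ and $p=2$ explicitly (via {\L}o\'s's theorem and Lemma~\ref{lem:mulim}) and then simply asserts that ``this property extends to any number of free variables,'' which is precisely the downward induction you have spelled out. Your treatment of the permutation invariance via the relabelled formula $\phi_\sigma$ is also in the spirit of the paper's statement that the proposition merely summarises the preceding computations.
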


However, the above constructed measure algebra is non-separable (see \cite{ElekSze,Conley2012} for discussion). 

\section{A Particular Case}
\label{sec:case}
Instead of restricting convergence to a fragment of ${\rm FO}(\mathcal L)$,
it is also interesting to consider restricted classes of structures. 
For instance, the class of graphs with maximum degree at most $D$ (for some integer $D$)
received much attention. Specifically, the notion of {\em local weak convergence} of bounded degree graphs was introduced in
\cite{Benjamini2001}:

A {\em rooted graph} is a pair $(G, o)$, where $o\in V(G)$.
 An {\em isomorphism} of rooted graph $\phi: (G, o)\rightarrow (G', o')$
  is an isomorphism of the underlying
graphs which satisfies $\phi(o) = o'$. 
Let $D\in\bbbn$. 
Let ${\mathcal G}_D$ denote the collection of
all isomorphism classes of connected 
rooted graphs with maximal degree at most $D$.
For simplicity's sake, we denote elements of $\mathcal G_D$ simply as graphs.
 For $(G, o)\in{\mathcal G}_D$ and
$r\geq 0$ let $B_G(o, r)$ 
denote the subgraph of $G$ spanned by the vertices at 
distance at most $r$ from
$o$. If $(G, o), (G', o') \in{\mathcal G}_D$ and $r$
 is the largest integer such that
$(B_G(o, r), o)$ is
rooted-graph isomorphic to
$(B_{G'}(o', r), o')$, then set 
$\rho((G, o), (G', o'))= 1/r$, say. Also take 
$\rho((G, o), (G, o))=0$.
 Then $\rho$ is metric on ${\mathcal G}_D$. 
 Let $\mathfrak M_D$ denote the space of all probability measures
 on ${\mathcal G}_D$  that
are measurable with respect to the Borel $\sigma$-field of 
$\rho$. Then $\mathfrak M_D$ is endowed
with the topology of weak convergence, and is compact in this topology.

A sequence $(G_n)_{n\in\bbbn}$ of finite connected graphs with maximum degree at most $D$ is 
{\em BS-convergent} if, for every integer $r$ and every rooted connected graph 
$(F,o)$ with maximum degree at most $D$ the following limit exists:
$$
\lim_{n\rightarrow\infty}\frac{|\{v: B_{G_n}(v,r)\cong (F,o)\}|}{|G_n|}.
$$

This notion of limits leads to the definition of a limit object
as a probability measure on ${\mathcal G}_D$ \cite{Benjamini2001}.

However, as we shall
see below, a nice representation
of the limit structure can be given.
To relate BS-convergence to $X$-convergence, we shall consider the fragment 
${\rm FO}_1^{\rm local}$ of those formulas with 
at most $1$ free variable that are local. Formally, let
${\rm FO}_1^{\rm local}={\rm FO}^{\rm local}\cap{\rm
FO}_1$.

\begin{proposition}
\label{prop:BS}
Let $(G_n)$ be a sequence of finite graphs with maximum degree $d$, with
$\lim_{n\rightarrow\infty}|G_n|=\infty$. 

Then the following properties are equivalent:
\begin{enumerate}
  \item the sequence $(G_n)_{n\in\bbbn}$ is BS-convergent;
  \item the sequence $(G_n)_{n\in\bbbn}$ is ${\rm FO}_1^{\rm
local}$-convergent;
  \item the sequence $(G_n)_{n\in\bbbn}$ is ${\rm FO}^{\rm
local}$-convergent.
\end{enumerate}
\end{proposition}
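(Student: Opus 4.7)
The plan is to prove the cycle $(3)\Rightarrow(2)\Rightarrow(1)\Rightarrow(3)$. The first implication is immediate because ${\rm FO}_1^{\rm local}$ is by definition a sub-fragment of ${\rm FO}^{\rm local}$, so every ${\rm FO}^{\rm local}$-convergent sequence is ${\rm FO}_1^{\rm local}$-convergent.

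For $(2)\Rightarrow(1)$, I would fix $r\in\bbbn$ and a rooted connected graph $(F,o)$ with maximum degree at most $d$ and radius at most $r$ from $o$. Because the language is relational and the structures in question have bounded degree, there is a formula $\phi_{(F,o)}(x)\in{\rm FO}_1^{\rm local}$ (in fact, $r$-local) expressing ``the $r$-ball rooted at $x$ is isomorphic to $(F,o)$''; its density $\langle\phi_{(F,o)},G_n\rangle$ equals exactly the fraction $|\{v:B_{G_n}(v,r)\cong(F,o)\}|/|G_n|$. Since there are only finitely many isomorphism types of rooted connected graphs of radius $\leq r$ and maximum degree $\leq d$, convergence of all these densities yields BS-convergence.

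The main work is $(1)\Rightarrow(3)$. Let $\phi(x_1,\dots,x_p)$ be an $r$-local formula. I would partition the $p$-tuples $(v_1,\dots,v_p)\in G_n^p$ according to whether they are \emph{scattered}, meaning pairwise at distance $>2r$ in $G_n$, or not. For scattered tuples, the neighborhoods $N_r(v_i)$ are pairwise disjoint, so $G_n[N_r(v_1,\dots,v_p)]$ is the disjoint union of the rooted balls $(B_{G_n}(v_i,r),v_i)$, and whether $\phi$ holds depends only on the tuple of rooted isomorphism types $((F_1,o_1),\dots,(F_p,o_p))$. For non-scattered tuples, some pair $v_i,v_j$ lies within distance $2r$; since the degree is bounded by $d$, each vertex has at most $N(d,r):=1+d+\dots+d(d-1)^{2r-1}$ vertices within distance $2r$, so the number of non-scattered tuples is at most $\binom{p}{2}N(d,r)|G_n|^{p-1}$, contributing an $O(1/|G_n|)$ term to $\langle\phi,G_n\rangle$ that vanishes since $|G_n|\to\infty$.

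For the scattered part, let $\mathcal R_{d,r}$ denote the (finite) set of isomorphism types of rooted connected graphs of radius $\leq r$ and maximum degree $\leq d$, and let $\epsilon_\phi(T_1,\dots,T_p)\in\{0,1\}$ indicate whether the disjoint union of representatives satisfies $\phi$. Counting scattered tuples by type and correcting for the $O(|G_n|^{p-1})$ overcount from tuples sharing a close pair gives
\[
\langle\phi,G_n\rangle=\sum_{(T_1,\dots,T_p)\in\mathcal R_{d,r}^p}\epsilon_\phi(T_1,\dots,T_p)\prod_{i=1}^p\frac{n_{T_i}(G_n)}{|G_n|}+O\!\left(\tfrac{1}{|G_n|}\right),
\]
where $n_T(G_n)=|\{v:B_{G_n}(v,r)\cong T\}|$. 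Each ratio $n_{T_i}(G_n)/|G_n|$ equals the density of the one-variable local formula $\phi_{T_i}$ and therefore converges by (1) via the equivalence already established with (2). As the sum is finite, $\langle\phi,G_n\rangle$ converges, yielding ${\rm FO}^{\rm local}$-convergence. The main technical obstacle is precisely this scattered/non-scattered decomposition: one must carefully verify that satisfaction of $\phi$ on a scattered tuple depends only on the unordered-but-indexed list of local types, and that the error bound from close pairs is indeed $o(1)$, both of which rely crucially on the degree bound and on $|G_n|\to\infty$.
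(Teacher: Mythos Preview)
Your proof is correct and follows essentially the same approach as the paper. The paper organizes the argument as $(3)\Rightarrow(2)\Rightarrow(1)\Rightarrow(2)\Rightarrow(3)$ rather than your cycle $(3)\Rightarrow(2)\Rightarrow(1)\Rightarrow(3)$, but the substance is identical: the key step is the scattered/non-scattered decomposition showing that for an $r$-local $\phi(x_1,\dots,x_p)$ the satisfying tuples differ from a product of one-variable local type sets by $O(|G|^{p-1})$, using the degree bound to control the number of non-scattered tuples and the finiteness of the set of rooted $r$-ball types to reduce to BS-convergence.
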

\begin{proof}
If $(G_n)_{n\in\bbbn}$ is ${\rm FO}^{\rm local}$-convergent, it is ${\rm FO}^{\rm
local}_1$-convergent;

If $(G_n)_{n\in\bbbn}$ is ${\rm FO}_1^{\rm local}$-convergent then it is 
BS-convergent as for any finite rooted graph $(F,o)$, testing whether the
the ball of radius $r$ centered at a vertex $x$ is isomorphic to
$(F,o)$ can be formulated by a local first order formula.

Assume $(G_n)_{n\in\bbbn}$ is BS-convergent.
As we consider graphs with maximum degree $d$, there are only
finitely many isomorphism types for the balls of radius $r$ centered at a
vertex. It follows that any local formula $\xi(x)$ with a single variable can be
expressed as the conjunction of  a finite number of (mutually exclusive)
formulas $\xi_{(F,o)}(x)$, which in turn correspond to subgraph testing.
It follows that BS-convergence implies ${\rm FO}_1^{\rm local}$-convergence.

Assume $(G_n)_{n\in\bbbn}$ is ${\rm FO}_1^{\rm local}$-convergent and
let $\phi(x_1,\dots,x_p)$ be an $r$-local formula.
Let $\mathcal F_\phi$ be the set of all $p$-tuples 
$((F_1,f_1),\dots,(F_p,f_p))$ of rooted connected graphs
with maximum degree at most $d$ and radius (from the root) at most $r$ such that
$\bigcup_i F_i\models \phi(f_1,\dots,f_p)$.

Then, for every graph $G$ the sets

$$\{(v_1,\dots,v_p):\ G\models\phi(v_1,\dots,v_p)\}$$
and
$$\biguplus_{((F_1,f_1),\dots,(F_p,f_p))\in\mathcal F_\phi}\prod_{i=1}^p
\{v:\ G\models\theta_{(F_i,f_i)}(v)\}$$
differ by at most $O(|G|^{p-1})$ elements.
Indeed, according to the definition of an $r$-local formula, the $p$-tuples
$(x_1,\dots,x_p)$ belonging to exactly one of these sets are such that there
exists $1\leq i<j\leq p$ such that ${\rm dist}(x_i,x_j)\leq 2r$.

It follows that
$$
\langle\phi,G\rangle=\bigl(\sum_{((F_i,f_i))_{1\leq i\leq p}\in\mathcal
F_\phi}\, \prod_{i=1}^p\,\langle\theta_{(F_i,f_i)},G\rangle\bigr)+O(|G|^{-1}).
$$

It follows that ${\rm FO}^{\rm local}_1$-convergence (hence BS-convergence)
implies full ${\rm FO}^{\rm local}$-convergence.
\qed\end{proof}

According to this proposition, the BS-limit of a sequence of graphs
with maximum degree at most $D$ corresponds to a probability measure
on $S({\rm FO}_1^{\rm local}(\mathcal L))$ 
(where $\mathcal L$ is the language of graphs) whose support is included in
the clopen set $K(\zeta_D)$,  where $\zeta_D$ is the sentence
expressing that the maximum degree is at most $D$.
As above, the Boolean algebra ${\rm FO}_1^{\rm local}(\mathcal L)$ is isomorphic
to the Boolean algebra defined by the fragment 
$X\subset {\rm FO}_0(\mathcal L_1)$ 
of sentences in the language of rooted graphs
that are local with respect to the root.  
According to this locality, any two countable rooted graphs $(G_1,r_1)$ 
and $(G_2,r_2)$,
the trace of the complete theories of $(G_1,r_1)$ and $(G_2,r_2)$ on $X$
 are the same if and only if the (rooted) connected component $(G_1',r_1)$ of $(G_1,r_1)$ 
 containing the root $r_1$ is elementary equivalent to the (rooted) connected component $(G_2',r_2)$ of $(G_2,r_2)$ 
 containing the root $r_2$. As isomorphism and elementary equivalence
are equivalent for  countable connected graphs with bounded degrees 
it is easily checked that $K_X(\zeta_D)$ is homeomorphic to $\mathcal G_D$.
Hence our setting leads essentially to the same limit object as \cite{Benjamini2001}
for BS-convergent sequences. 

We now consider how full ${\rm FO}$-convergence differs to BS-convergence
for sequence of graphs with maximum degree at most $D$. This shows a remarkable 
stability of BS-convergence.
\begin{corollary}
\label{cor:BSE}
A sequence $(G_n)$ of finite graphs with maximum degree at most $d$ such that
$\lim_{n\rightarrow\infty}|G_n|=\infty$ is
${\rm FO}$-convergent if and only if it is both 
BS-convergent and elementarily convergent.
\end{corollary}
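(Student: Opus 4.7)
The plan is to deduce the corollary directly by chaining Proposition~\ref{prop:fole} with Proposition~\ref{prop:BS}; no genuinely new argument is required, since everything needed has been set up in the preceding propositions.

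First I would invoke Proposition~\ref{prop:fole}, which applies to any sequence of graphs and states that ${\rm FO}$-convergence is equivalent to the conjunction of ${\rm FO}^{\rm local}$-convergence and elementary convergence. Then I would invoke Proposition~\ref{prop:BS}, whose hypotheses (bounded maximum degree together with $|G_n|\to\infty$) match exactly the setting of the corollary, and which identifies BS-convergence with ${\rm FO}^{\rm local}$-convergence (via the intermediate ${\rm FO}_1^{\rm local}$-convergence).

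Chaining these equivalences yields both implications. For the forward direction, if $(G_n)$ is ${\rm FO}$-convergent, then Proposition~\ref{prop:fole} gives elementary convergence together with ${\rm FO}^{\rm local}$-convergence, and the latter is BS-convergence by Proposition~\ref{prop:BS}. For the backward direction, BS-convergence translates to ${\rm FO}^{\rm local}$-convergence via Proposition~\ref{prop:BS}, and combining with the assumed elementary convergence and applying Proposition~\ref{prop:fole} yields full ${\rm FO}$-convergence.

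The only subtle point to flag is that the hypothesis $\lim_{n\to\infty}|G_n|=\infty$ enters only through the application of Proposition~\ref{prop:BS}, where it ensures that the $O(|G|^{-1})$ error term in the reduction of local formulas with $p$ free variables to products of single-variable local densities actually vanishes in the limit. Since both invoked propositions already carry all the necessary work, I expect no real obstacle; the corollary is essentially a bookkeeping statement that combines the Gaifman-locality decomposition with the bounded-degree identification of local convergence and BS-convergence.
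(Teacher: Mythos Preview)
Your proposal is correct and matches the paper's own proof exactly: the paper simply states that the corollary is a direct consequence of Propositions~\ref{prop:fole} and~\ref{prop:BS}. Your expanded explanation of both directions and of where the hypothesis $|G_n|\to\infty$ is used is a faithful unpacking of that one-line argument.
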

\begin{proof}
This is a direct consequence of Propositions~\ref{prop:fole} and~\ref{prop:BS}.
\qed\end{proof}

Explicit limit objects are known for sequence of bounded degree graphs, both for
BS-convergence (graphing) and for elementary convergence (countable graphs).
It is natural to ask whether a nice limit object could exist for full ${\rm
FO}$-convergence. We shall now answer this question by the positive.

Let $V$ be a standard Borel space with a measure $\mu$.
Suppose that $T_1, T_2,\dots, T_k$ are
measure preserving Borel involutions of $X$.
Then the system $$\mathbf G = (V, T_1, T_2, \dots, T_k, \mu)$$ is
called a {\em measurable graphing} \cite{Adams1990}.
Here $x$ is adjacent to $y$, if $x \neq y$ and $T_j(x) = y$
 for some $1\leq j \leq k$. Now if
If $V$ is a compact metric space with a Borel measure $\mu$ and
$T_1, T_2,\dots, T_k$ are continuous
measure preserving involutions of $V$, then
$\mathbf G = (V, T_1, T_2, \dots, T_k, \mu)$ is
a {\em topological graphing}.
It is a consequence of \cite{Benjamini2001} and \cite{Gaboriau2005} that
every local weak limit of finite connected graphs with maximum
degree at most $D$ can be represented
as a measurable graphing. Elek \cite{Elek2007b} further proved the representation
can be required to be a topological graphing.

For an integer $r$, a graphing ${\mathbf G}=(V,T_1,\dots,T_k,\mu)$ and
 a finite rooted graph $(F,o)$ we define the set 

$$D_r(\mathbf G,(F,o))=\{x\in\mathbf G, B_r(\mathbf G,x)\simeq (F,o)\}.$$

We shall make use of the following lemma which reduces a graphing to its essential support.

\begin{lemma}[Cleaning Lemma]
Let $\mathbf G=(V, T_1,\dots,T_d, \mu)$ be a graphing.

Then there exists a subset $X\subset V$ with $0$ measure such that
$X$ is globally invariant by each of the $T_i$ and
$\mathbf G'=(V-X,T_1,\dots,T_d, \mu)$ is a graphing
such that  for every finite rooted graph $(F,o)$ and integer $r$ it holds
$$
\mu(D_r(\mathbf G',(F,o)))=\mu(D_r(\mathbf G,(F,o)))
$$
(which means that $\mathbf G'$ is equivalent to $\mathbf G$) and
$$D_r(\mathbf G',(F,o))\neq\emptyset\quad\iff\quad\mu(D_r(\mathbf G',(F,o)))>0.$$
\end{lemma}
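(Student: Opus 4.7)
The plan is to build $X$ as the $\Gamma$-orbit of the union of all local-type classes of measure zero, where $\Gamma$ is the countable group generated by the involutions $T_1,\dots,T_d$. The intuition is that simply discarding a null set $Z$ is of no use unless $Z$ is $\Gamma$-invariant: a non-invariant removal would cut edges incident to retained vertices and thereby alter their balls. Saturating $Z$ under $\Gamma$ costs nothing in measure (the group is countable and each element is measure-preserving) but ensures that the restricted graphing realizes exactly the same $r$-balls at every retained point.

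Concretely, I would enumerate the countably many pairs $(r,(F,o))$ consisting of an integer $r\ge 0$ and an isomorphism class of finite rooted graph. Each set $D_r(\mathbf G,(F,o))$ is Borel, since the condition ``$B_r(\mathbf G,x)\cong (F,o)$'' can be written as a finite Boolean combination of conditions of the form $T_{i_1}\circ\cdots\circ T_{i_k}(x)=T_{j_1}\circ\cdots\circ T_{j_\ell}(x)$ with $k,\ell\le r$, each of which is Borel. Set
$$
Z=\bigcup\bigl\{\,D_r(\mathbf G,(F,o))\,:\,\mu(D_r(\mathbf G,(F,o)))=0\,\bigr\},
$$
a countable union of $\mu$-null Borel sets, so $\mu(Z)=0$. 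Then put $X=\bigcup_{\gamma\in\Gamma}\gamma(Z)$; this is again Borel and $\mu$-null, and by construction $T_i(X)\subseteq X$ for every $i$, so $X$ is globally invariant under each $T_i$. The restrictions of the $T_i$ to $V\setminus X$ are therefore still measure-preserving Borel involutions, and $\mathbf G'=(V\setminus X,T_1,\dots,T_d,\mu)$ is a graphing.

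For the two required properties, the key observation is that since $X$ is $\Gamma$-invariant and each $T_i$ is an involution, $x\notin X$ forces $T_i(x)\notin X$ for every $i$; iterating, the closed $r$-neighborhood of any $x\in V\setminus X$ is entirely contained in $V\setminus X$ and coincides as a rooted graph with $B_r(\mathbf G',x)$. Hence $D_r(\mathbf G',(F,o))=D_r(\mathbf G,(F,o))\setminus X$, which has the same $\mu$-measure as $D_r(\mathbf G,(F,o))$ because $\mu(X)=0$. If in addition $\mu(D_r(\mathbf G',(F,o)))=0$ then $\mu(D_r(\mathbf G,(F,o)))=0$ as well, so $D_r(\mathbf G,(F,o))$ was one of the sets included in $Z$, hence in $X$, forcing $D_r(\mathbf G',(F,o))=\emptyset$. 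The only delicate point is precisely this invariance argument: without closing $Z$ under the group action, removing $Z$ could alter the $r$-balls of points outside $Z$; everything else reduces to the countability of $\Gamma$ and of the set of isomorphism types of finite rooted graphs.
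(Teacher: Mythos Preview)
Your argument is correct, and it follows a slightly different route from the paper's. The paper defines the same set $Z$ as you do, but instead of saturating it under the group $\Gamma$ generated by the $T_i$, it proves directly that $Z$ is \emph{already} a union of connected components of $\mathbf G$: if $x\in Z$ witnesses $\mu(D_r(\mathbf G,B_r(\mathbf G,x)))=0$ and $y$ is adjacent to $x$, then the $(r+1)$-ball type of $y$ determines the $r$-ball type of $x$, whence
\[
\mu\bigl(D_{r+1}(\mathbf G,B_{r+1}(\mathbf G,y))\bigr)\le d\cdot \mu\bigl(D_r(\mathbf G,B_r(\mathbf G,x))\bigr)=0,
\]
so $y\in Z$. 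Thus in the paper one takes $X=Z$ with no further work.

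Your saturation step is a perfectly good alternative: it trades that combinatorial inequality for the general fact that a countable group of measure-preserving transformations sends null sets to null sets. What the paper's approach buys is the extra structural information that the set of ``bad'' local types is automatically $\Gamma$-invariant (so your saturation does not actually enlarge $Z$); what your approach buys is that one never needs to think about how neighbouring balls overlap. Either way the conclusion is the same, and your final paragraph deriving $D_r(\mathbf G',(F,o))=D_r(\mathbf G,(F,o))\setminus X$ and the emptiness criterion is exactly right.
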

\begin{proof}
For a fixed $r$, define $\mathcal F_r$ has the set of all (isomorphism types of)
finite rooted graphs $(F,o)$ with radius at most $r$ such that 
$\mu(D_r(\mathbf G,(F,o)))=0$.
Define
$$
X=\bigcup_{r\in\bbbn}\bigcup_{(F,o)\in \mathcal F_r}D_r(\mathbf G,(F,o)).
$$
Then $\mu(X)=0$, as it is a countable union of $0$-measure sets. 

We shall now prove that $X$ is a union of connected components of $\mathbf G$, that is that
$X$ is globally invariant by each of the $T_i$.
Namely, if $x\in X$ and $y$ is adjacent to $x$, then $y\in X$.
Indeed: if $x\in X$ then there exists an integer $r$ such that
$\mu(D(\mathbf G,B_r(\mathbf G,x)))=0$. But it is easily checked that

$$
\mu(D(\mathbf G,B_{r+1}(\mathbf G,y)))\leq d \cdot\mu(D(\mathbf G,B_r(\mathbf
G,x))).
$$
Hence $y\in X$.
It follows that for every $1\leq i\leq d$ we have $T_i(X)=X$. 
So we can define the graphing $\mathbf G'=(V-X,T_1,\dots,T_d,\mu)$.

Let $(F,o)$ be a rooted finite graph. Assume there exists
$x\in\mathbf G'$ such that $B_r(\mathbf G',r)\simeq (F,o)$. As $X$ is a union
of connected components, we also have $B_r(\mathbf G,r)\simeq (F,o)$ and
$x\notin X$. 

It follows that $\mu(D(\mathbf G,(F,o)))>0$  hence
$\mu(D_r(\mathbf G',(F,o)))>0$.
\qed\end{proof}

The cleaning lemma allows us a clean description of ${\rm FO}$-limits in the bounded
degree case:

\begin{theorem}
\label{thm:Bounded}
Let $(G_n)_{n\in\bbbn}$ be a FO-convergent sequence of finite graphs with
maximum degree $d$, with $\lim_{n\rightarrow\infty}|G_n|=\infty$. Then
there exists a graphing $\mathbf G$ and a countable graph $\hat G$ such that
\begin{itemize}
  \item $\mathbf G$ is a BS-limit of the sequence,
  \item $\hat G$ is an elementary limit of the sequence,
  \item $\mathbf G\cup\hat G$ is an ${\rm FO}$-limit of the sequence.
\end{itemize}
\end{theorem}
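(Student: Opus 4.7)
The plan is to exploit Corollary~\ref{cor:BSE}, which asserts that under the stated hypotheses ${\rm FO}$-convergence is equivalent to BS-convergence plus elementary convergence. So $(G_n)$ is at once BS-convergent and elementarily convergent; let $T$ denote its complete elementary limit theory. First I would take $\mathbf G$ to be a topological graphing representing the BS-limit, which exists by the results of Benjamini--Schramm, Gaboriau and Elek cited above, and I would apply the Cleaning Lemma so that every realisable $r$-ball type $(F,o)$ satisfies $\mu(D_r(\mathbf G,(F,o)))>0$. Then I would take $\hat G$ to be any countable model of $T$, which exists by the downward L\"owenheim--Skolem theorem and is by definition an elementary limit of $(G_n)$. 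Finally, I would form the disjoint union $\mathbf G\cup\hat G$, endowed with the probability measure $\mu$ supported entirely on $\mathbf G$ (so $\hat G$ carries measure zero).

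By Proposition~\ref{prop:fole}, it then suffices to show that $\mathbf G\cup\hat G$ is both an ${\rm FO}^{\rm local}$-limit and an elementary limit of the sequence. The local part is essentially free: for every local formula $\phi(x_1,\dots,x_p)$ one has $\langle\phi,\mathbf G\cup\hat G\rangle=\langle\phi,\mathbf G\rangle$ because the measure is concentrated on $\mathbf G$, and then Proposition~\ref{prop:BS} identifies this with the BS-limit of $(G_n)$. For the elementary part, Gaifman's theorem~\ref{thm:gaifman} reduces matters to comparing, for every basic local sentence
$$\sigma=\exists y_1\ldots y_m\Bigl(\bigwedge_{i<j}{\rm dist}(y_i,y_j)>2r\ \wedge\ \bigwedge_i\psi(y_i)\Bigr),$$
the truth value of $\sigma$ in $\mathbf G\cup\hat G$ with membership of $\sigma$ in $T$. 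The direction $\sigma\in T\Rightarrow\mathbf G\cup\hat G\models\sigma$ is immediate, because $\hat G\models\sigma$ and $\sigma$ is purely existential.

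The main obstacle is the converse. Suppose $\mathbf G\cup\hat G\models\sigma$ and fix a witness tuple. If all $m$ witnesses lie in $\hat G$ then $\hat G\models\sigma$ and we are done. Otherwise at least one $\psi$-witness lies in $\mathbf G$; because $\psi$ is $r$-local and only finitely many $r$-ball types of maximum degree $d$ exist, the set $\{x\in\mathbf G:\psi(x)\}$ is a finite union of the $D_r(\mathbf G,(F,o))$, so the Cleaning Lemma forces $\mu(\{x\in\mathbf G:\psi(x)\})>0$. By BS-convergence, $|\{x\in G_n:\psi(x)\}|/|G_n|$ tends to that positive value, and since $|G_n|\to\infty$ the absolute count of $\psi$-witnesses in $G_n$ diverges. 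A greedy extraction then produces $\Omega(|G_n|)$ pairwise $(2r)$-far $\psi$-witnesses in $G_n$ (each $2r$-ball contains at most $\sum_{i=0}^{2r}d^i$ vertices), so $G_n\models\sigma$ for all large $n$ and therefore $\sigma\in T$. This ``positive-measure to many far-apart witnesses'' transfer is the hard step, and it is precisely the Cleaning Lemma together with $|G_n|\to\infty$ that makes it go through. Combining the two directions, $\mathbf G\cup\hat G$ is elementarily equivalent to $\hat G$, and Proposition~\ref{prop:fole} concludes that $\mathbf G\cup\hat G$ is an ${\rm FO}$-limit of $(G_n)$.
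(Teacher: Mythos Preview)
Your proof is correct and follows the same overall architecture as the paper: take a cleaned graphing $\mathbf G$ as BS-limit, a countable model $\hat G$ of the limit theory, give the disjoint union the measure concentrated on $\mathbf G$, and then argue that the union is simultaneously a local limit and an elementary limit. The one substantive difference is the tool used for the elementary-equivalence step $\mathbf G\cup\hat G\equiv\hat G$. The paper invokes Hanf's theorem, reducing the question to checking that the threshold counts $\min(t,|D_r(\cdot,(F,o))|)$ agree for every $r,t,(F,o)$; the contradiction is then immediate from cleaning plus BS-convergence (if some ball type occurs in $\mathbf G$ it has positive measure, hence occurs $>t$ times in $G_n$ for large $n$, hence $>t$ times in $\hat G$). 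You instead stay with Gaifman's basic local sentences and supply the extra greedy-packing step to manufacture $m$ pairwise $(2r)$-far $\psi$-witnesses inside $G_n$. Both routes are valid; Hanf's criterion absorbs the packing argument once and for all, while your approach is self-contained given only Theorem~\ref{thm:gaifman}, which the paper already states.

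One minor quibble: calling $\sigma$ ``purely existential'' is not quite accurate, since the clause ${\rm dist}(y_i,y_j)>2r$ is universal. Your transfer $\hat G\models\sigma\Rightarrow\mathbf G\cup\hat G\models\sigma$ is nevertheless correct, because in a disjoint union distances between vertices of $\hat G$ are unchanged and $\psi$ is $r$-local; you should phrase the justification that way rather than appeal to existentiality.
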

\begin{proof}
Let $\mathbf G$ be a BS-limit, which has been ``cleaned'' using the previous
lemma, and let $\hat G$ be an elementary limit of $G$. It is clear that 
$\mathbf G\cup\hat G$ is also a BS-limit of the sequence, so the lemma amounts
in proving that $\mathbf G\cup\hat G$ is elementarily equivalent to $\hat G$.

According to Hanf's theorem \cite{Hanf1965}, it  is sufficient to prove that for
every integers $r,t$ and for every rooted finite graph $(F,o)$ (with
maximum degree $d$) the following equality holds:

$$\min(t,|D_r(\mathbf G\cup\hat G,(F,o))|)=\min(t,|D_r(\hat G,(F,o))|).
$$
Assume for contradiction that this is not the case. Then 
$|D_r(\hat G,(F,o))|<t$ and $D_r(\mathbf G,(F,o))$ is not empty. However, as
$\mathbf G$ is clean, this implies $\mu(D_r(\mathbf G,(F,o)))=\alpha>0$.
It follows that for every sufficiently large $n$ it holds
$|D_r(G_n,(F,o))|>\alpha/2\, |G_n|>t$. Hence $|D_r(\hat G,(F,o))|>t$, contradicting
our hypothesis.

Note that the reduction of the satisfaction problem of a general first-order formula
$\phi$ with $p$ free variables to a case analysis based on the isomorphism type 
of a bounded neighborhood of the free variables shows that every first-order
definable subset of $(\mathbf G\cup\hat G)^p$ is indeed measurable (we extend $\mu$ to
$\mathbf G\cup\hat G$ in the obvious way, considering $\hat G$ as zero measure).
\qed\end{proof}

The cleaning lemma sometimes applies in a non-trivial way:
\begin{example}
Consider the graph $G_n$ obtained from a De Bruijn sequence (see e.g. \cite{Invitation1998}) of
length $2^n$ as shown Fig~\ref{fig:debruijn}.

\begin{figure}[h!t]
\begin{center}
\includegraphics[width=.5\textwidth]{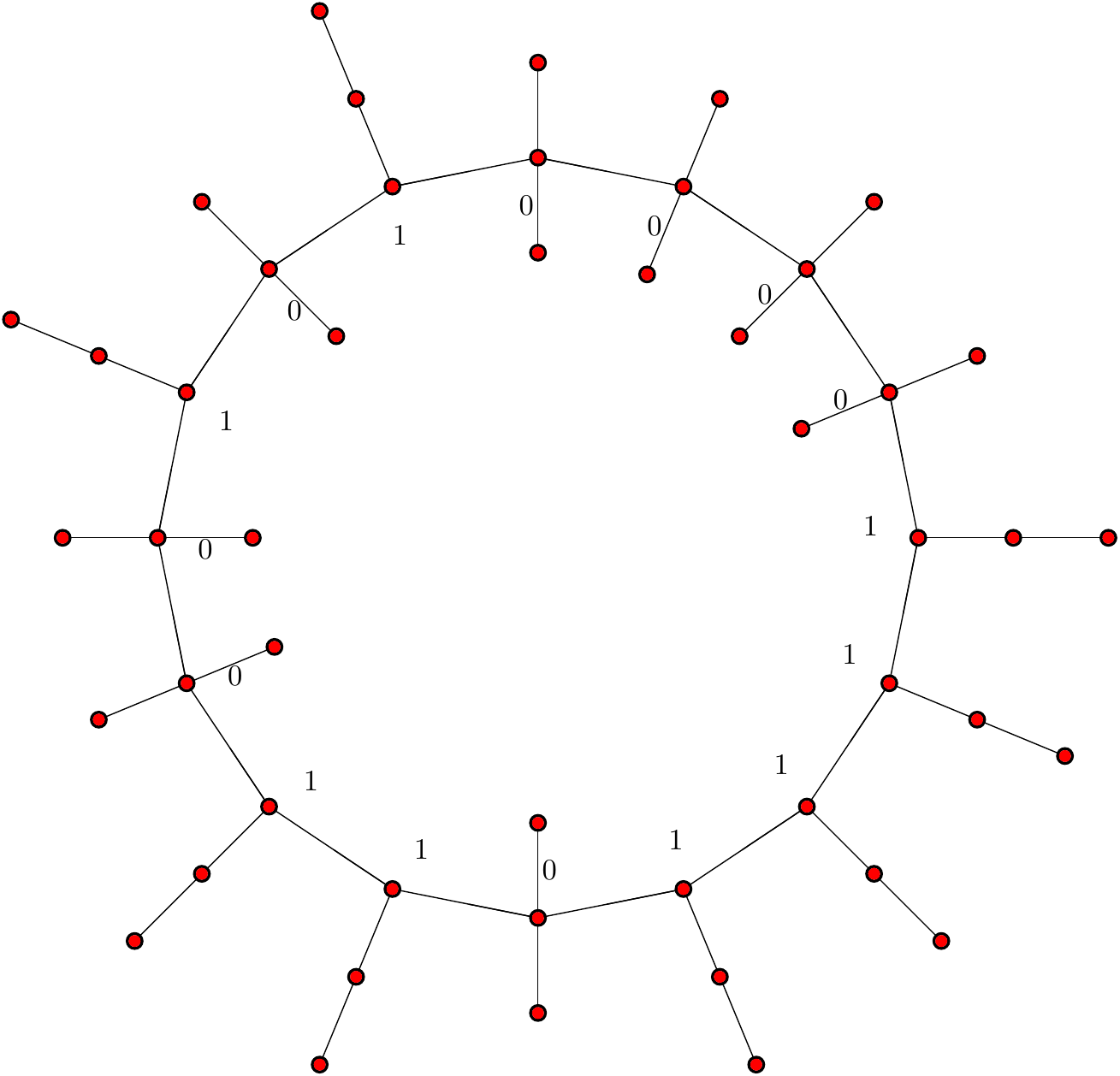}
\end{center}
\caption{The graph $G_n$ is constructed from a De Bruijn sequence of length
$2^n$.}
\label{fig:debruijn}
\end{figure}

It is easy to define a graphing $\mathbf G$, which is the limit of the sequence
$(G_n)_{n\in\bbbn}$: as vertex set, we consider the rectangle
$[0;1)\times[0;3)$. We define a measure preserving function $f$ and two
measure preserving involutions $T_1,T_2$ as follows:
\begin{align*}
f(x,y)&=\begin{cases}
(2x,y/2)&\text{if }x<1/2\text{ and }y<1\\
(2x-1,(y+1)/2)&\text{if }1/2\leq x\text{ and }y<1\\
(x,y)&\text{otherwise}
\end{cases}\\
T_1(x,y)&=\begin{cases}
(x,y+1)&\text{if }y<1\\
(x,y-1)&\text{if }1\leq y<2\\
(x,y)&\text{otherwise}
\end{cases}\\
T_2(x,y)&=\begin{cases}
(x,y+1)&\text{if }x<1/2\text{ and }1\leq y<2\\
(x,y+2)&\text{if }1/2\leq x\text{ and }y<1\\
(x,y-1)&\text{if }x<1/2\text{ and }2\leq y\\
(x,y-2)&\text{if }1/2\leq x\text{ and }2\leq y\\
(x,y)&\text{otherwise}
\end{cases}\\
\end{align*}

Then the edges of $\mathbf G$ are the pairs $\{(x,y),(x',y')\}$ such that
$(x,y)\neq(x',y')$ and either $(x',y')=f(x,y)$, or $(x,y)=f(x',y')$, or
$(x',y')=T_1(x,y)$, or $(x',y')=T_2(x,y)$.

If one considers a random root $(x,y)$ in $\mathbf G$, then the connected
component of $(x,y)$ will almost surely be a rooted line with some decoration,
as expected from what is seen from a random root in a sufficiently large $G_n$.
However, special behaviour may happen when $x$ and $y$ are rational. Namely, it
is possible that the connected component of $(x,y)$ becomes finite. For
instance, if $x=1/(2^n-1)$ and $y=2^{n-1}x$ then the orbit of $(x,y)$ under the
action of $f$  has length $n$ thus the connected component of $(x,y)$ in
$\mathbf G$ has order $3n$. Of course, such finite connected components do
not appear in $G_n$.
Hence, in order to clean $\mathbf G$, infinitely many components have to be
removed.
\end{example}

\section{Conclusion and Further Works}
\label{sec:conc}
In a forthcoming paper \cite{NPOM1}, we apply the theory developed here to
the context of classes of graphs with bounded diameter connected
components, and in particular to classes with bounded tree-depth~\cite{Taxi_tdepth}.
Specifically, we prove that if a uniform bound is fixed on the diameter of the connected
components, ${\rm FO}$-convergence may be considered component-wise (up to some 
residue for which ${\rm FO}_1$-convergence is sufficient).

The prototype of convenient limit objects for sequences of finite graphs 
is a quadruple $\mathbb G=(V,E,\Sigma,\mu)$, 
where $(V,E)$ is a graph, $(V,\Sigma,\mu)$ is a standard probability space, and
$E$ is a measurable subset of $V^2$. In such a context, modulo the axiom of projective determinacy
(which would follow from the existence of infinitely many Woodin cardinals \cite{Martin1989}),
every first-order definable subset of $V^p$ is measurable in $(V^p,\Sigma^{\otimes p})$ \cite{Mycielski1964}.
Then, for every first-order formula $\phi$ with $p$ free variables, it is natural to define
$$\langle\psi,\mathbb G\rangle=\int_{V^p} 1_\phi\ {\rm d}\mu^{\otimes p}.$$
In this setting, $\mathbb G=(V,E,\Sigma,\mu)$ is a limit --- we don't pretend to have uniqueness --- of an
${\rm FO}$-convergent sequence $(G_n)_{n\in\bbbn}$ of finite graphs if
for every first-order formula $\psi$ it holds
$$
\langle\psi,\mathbb G\rangle=\lim_{n\rightarrow\infty}\langle\psi, G_n\rangle.
$$
We obtain in \cite{NPOM1} an explicit construction of such limits 
for ${\rm FO}$-convergent sequences of finite graphs bound to a class of graphs with bounded tree-depth.
It is also there where we develop in a greater detail the general theory 
explained in the Sections~\ref{sec:bool} and~\ref{sec:logic}.
Notice that in some special cases, one does not need a standard probability space and
a Borel measurable space is sufficient. This is for instance the case when we consider limits
of finite connected graphs with bounded degrees (as we can use a quantifier elimination scheme to prove
that definable sets are measurable) or quantifier-free convergence of graphs (definable sets form indeed
a sub-algebra of the $\sigma$-algebra).

\section*{Acknowledgments}

The authors would like to thanks the referee for his most valuable comments.

\begin{thebibliography}{10}
\providecommand{\url}[1]{{#1}}
\providecommand{\urlprefix}{URL }
\expandafter\ifx\csname urlstyle\endcsname\relax
  \providecommand{\doi}[1]{DOI~\discretionary{}{}{}#1}\else
  \providecommand{\doi}{DOI~\discretionary{}{}{}\begingroup
  \urlstyle{rm}\Url}\fi

\bibitem{Adams1990}
Adams, S.: Trees and amenable equivalence relations.
\newblock Ergodic Theory Dynam. Systems \textbf{10}, 1--14 (1990)

\bibitem{Aldous2006}
Aldous, D., Lyons, R.: Processes on unimodular random networks.
\newblock arXiv:math/0603062 (2006)

\bibitem{Benjamini2001}
Benjamini, I., Schramm, O.: Recurrence of distibutional limits of finite planar
  graphs.
\newblock Electron. J. Probab. \textbf{6}(23), 13pp (2001)

\bibitem{Borgs2005}
Borgs, C., Chayes, J., Lov\'asz, L., S\'os, V., Szegedy, B., Vesztergombi, K.:
  Graph limits and parameter testing.
\newblock In: Proc. 38th Annual {ACM} Symp. Principles of Dist. Comp., pp.
  51--59 (2005)

\bibitem{Conley2012}
Conley, C., Kechris, A., Tucker-Drob, R.: Ultraproducts of measure preserving
  actions and graph combinatorics.
\newblock Ergodic Theory and Dynamical Systems  (2012).
\newblock \doi{10.1017/S0143385711001143}

\bibitem{Elek2007b}
Elek, G.: Note on limits of finite graphs.
\newblock Combinatorica \textbf{27}, 503--507 (2007).
\newblock \doi{10.1007/s00493-007-2214-8}

\bibitem{ElekSze}
Elek, G., Szegedy, B.: Limits of hypergraphs, removal and regularity lemmas.
  {A} non-standard approach.
\newblock arXiv:0705.2179v1 [math.CO] (2007)

\bibitem{Gaboriau2005}
Gaboriau, D.: Invariant percolation and harmonic {D}irichlet functions.
\newblock Geometric And Functional Analysis \textbf{15}, 1004--1051 (2005).
\newblock \doi{10.1007/s00039-005-0539-2}

\bibitem{Gaifman1982}
Gaifman, H.: On local and non-local properties.
\newblock In: Proceedings of the Herbrand Symposium, Logic Colloquium '81
  (1982)

\bibitem{Halmos1998}
Halmos, P., Givant, S.: Logic as Algebra, \emph{Dolciani Mathematical
  Expositions}, vol.~21.
\newblock The Mathematical Association of America (1998)

\bibitem{Hanf1965}
Hanf, W.: Model-theoretic methods in the study of elementary logic.
\newblock In: J.A. et~al. (ed.) The theory of models, pp. 132--145.
  North-Holland (1965)

\bibitem{Hodges1997}
Hodges, W.: A Shorter Model Theory.
\newblock Cambridge University Press (1997)

\bibitem{Lascar2009}
Lascar, D.: La th{\'e}orie des mod{\`e}les en peu de maux.
\newblock Cassini (2009)

\bibitem{Los1955a}
{\L}o\'s, J.: Quelques remarques, th{\'e}or{\`e}mes et probl{\`e}mes sur les
  classes d{\'e}finissables d'alg{\`e}bres.
\newblock In: Mathematical interpretation of formal systems, Studies in logic
  and the foundations of mathematics. North-Holland (1955)

\bibitem{Lov'asz2006}
Lov\'asz, L., Szegedy, B.: Limits of dense graph sequences.
\newblock J. Combin. Theory Ser. B \textbf{96}, 933--957 (2006)

\bibitem{Martin1989}
Martin, D., Steel, J.: A proof of projective determinacy.
\newblock Journal of the American Mathematical Society \textbf{2}(1), 71--125
  (1989)

\bibitem{Invitation1998}
Matou{\v s}ek, J., Ne{\v s}et{\v r}il, J.: Invitation to discrete mathematics.
\newblock Oxford University Press (1998 (second printing 2008))

\bibitem{Mycielski1964}
Mycielski, J., {\'S}wierczkowski, S.: On the {L}ebesgue measurability and the
  axiom of determinateness.
\newblock Fund. Math. \textbf{54}, 67--71 (1964)

\bibitem{Taxi_tdepth}
Ne{\v s}et{\v r}il, J., Ossona~de Mendez, P.: Tree depth, subgraph coloring and
  homomorphism bounds.
\newblock European Journal of Combinatorics \textbf{27}(6), 1022--1041 (2006).
\newblock \doi{10.1016/j.ejc.2005.01.010}

\bibitem{ECM2009}
Ne{\v s}et{\v r}il, J., Ossona~de Mendez, P.: From sparse graphs to nowhere
  dense structures: Decompositions, independence, dualities and limits.
\newblock In: European Congress of Mathematics, pp. 135--165. European
  Mathematical Society (2010).
\newblock \doi{10.4171/077-1/7}

\bibitem{Sparsity}
Ne{\v s}et{\v r}il, J., Ossona~de Mendez, P.: Sparsity (Graphs, Structures, and
  Algorithms), \emph{Algorithms and Combinatorics}, vol.~28.
\newblock Springer (2012).
\newblock 465 pages

\bibitem{NPOM1}
Ne\v{s}et\v{r}il, J., Ossona~de Mendez, P.: Graph limits: a unified approach
  with application to the study of limits of graphs with bounded diameter
  components. (\noopsort{2500}2012).
\newblock Manuscript

\bibitem{Rudin1973}
Rudin, W.: Functional Analysis.
\newblock Mc-Graw Hill (1973)

\bibitem{Stone1936}
Stone, M.: The theory of representations of {B}oolean algebras.
\newblock Transactions of the American Mathematical Society \textbf{40},
  37--111 (1936)

\end{thebibliography}
\providecommand{\noopsort}[1]{}\providecommand{\noopsort}[1]{}

\end{document}